\theoremstyle{theorem}
\newtheorem{thm}{Theorem}
\newtheorem{lemma}{Lemma}
\newtheorem{prop}{Proposition}
\theoremstyle{definition}
\newtheorem*{ex}{Example}
\newcommand{\Hm}[1]{\leavevmode{\marginpar{\tiny%
$\hbox to 0mm{\hspace*{-0.5mm}$\leftarrow$\hss}%
\vcenter{\vrule depth 0.1mm height 0.1mm width \the\marginparwidth}
\hbox to  0mm{\hss$\rightarrow$\hspace*{-0.5mm}}$\\
\relax\raggedright #1}}}
\newcommand{\TT}{\mathbb{T}}
\newcommand{\T}{{\mathbb T}}
\newcommand{\Z}{{\mathbb Z}}
\newcommand{\R}{{\mathbb R}}
\newcommand{\C}{{\mathbb C}}
\newcommand{\h}{{\mathbb H}}
\newcommand{\N}{{\mathbb N}}
\newcommand{\A}{{\mathcal A}}
\newcommand{\Sp}{{\mathbb S}}
\newcommand{\Lp}{{L}}
\newcommand{\clos}{{\mathrm {clos}\,}}
\newcommand{\qand}{{\quad\mathrm {and}\quad}}
\newcommand{\qqand}{{\qquad\mathrm {and}\qquad}}
\newcommand{\dist}{{\mathrm{dist}}}
\newcommand{\al}{{\alpha}}
\newcommand{\de}{{\delta}}
\newcommand{\be}{{\beta}}
\newcommand{\Gm}{{\Gamma}}
\newcommand{\gm}{{\gamma}}
\newcommand{\ph}{{\varphi}}
\newcommand{\lm}{{\lambda}}
\newcommand{\eps}{{\varepsilon}}
\newcommand{\si}{{\sigma}}
\newcommand{\ap}[1]{\left( #1\right)}
\newcommand{\ab}[1]{\left( #1\right)}
\newcommand{\am}[1]{\vert #1\vert_{\arg}}
\newcommand{\na}[1]{\left\vert #1\right\vert_{\arg}}
\newcommand{\ma}[1]{\vert #1\vert_{\arg}}
\newcommand{\ov}[1]{\overline{#1}}
\newcommand{\mo}[1]{\left\vert #1\right\vert}
\newcommand{\ip}[1]{\langle #1\rangle}
\newcommand{\set}[1]{\left\{ #1\right\}}
\let\Im\undefined
\let\Re\undefined
\DeclareMathOperator{\Im}{Im}
\DeclareMathOperator{\Re}{Re}
\begin{document}

\title[spectral theory of trees with finite  cone  type] {On the spectral theory of trees with finite  cone  type}

\author{Matthias Keller}
\author{Daniel Lenz}
\author{Simone Warzel}

\address{Friedrich Schiller Universit\"at Jena, Mathematisches Institut, D-07745 Jena, Germany, m.keller@uni-jena.de.}
\address{Friedrich Schiller Universit\"at Jena, Mathematisches Institut, D-07745 Jena, Germany,\\ daniel.lenz@uni-jena.de,
 URL: http://www.analysis-lenz.uni-jena.de/  }
\address{Technische Universit\"at M\"unchen,
Zentrum Mathematik,
Boltzmannstra\ss e 3,
D-85747 Garching, warzel@ma.tum.de }

\keywords{}\subjclass[2000]{}

\begin{abstract}
We study basic spectral features of graph Laplacians associated with a  class of rooted trees which contains all regular trees. Trees in this class can be generated by substitution processes. Their spectra are shown to be purely absolutely continuous and to consist of finitely many bands. The main result gives stability of absolutely continuous spectrum under sufficiently small radially label symmetric perturbations for non regular trees in this class. In sharp contrast,  the  absolutely continuous spectrum can be completely destroyed by arbitrary small radially label symmetric perturbations for regular trees in this class.
\end{abstract}

\maketitle

\section{Introduction}
The aim of this paper is the investigation of the spectral theory of Laplacians of certain rooted trees.  In these trees the vertices are labeled with finitely many labels each  encoding the whole forward cone of the vertex. Accordingly, these trees can be considered as  trees of  finite cone type. Such trees can  be generated by substitution processes on a finite set. Thus, one can also consider  them  as substitution trees.

Assuming some irreducibility of the underlying substitution, we can completely describe the spectrum of the associated Laplacians: It is purely absolutely continuous and consists of finitely many intervals. In this sense these trees behave like Laplacians on lattices with periodic potentials.

Our main  result then  deals with perturbations by potentials. We assume that the underlying substitution has a strongly connected graph, i.e., each vertex is connected with any other vertex. We then consider potentials which are radially label symmetric  (i.e., within each sphere of vertices the potential depends only on the label of each vertex).  For such potentials with sufficiently small coupling  we show persistence of absolutely continuous spectrum on any   fixed proper piece of the spectrum (away from some finite exceptional energies) provided the tree is non regular. For regular trees on the other hand, the absolutely continuous spectrum can be destroyed by arbitrary small such potentials (as is well known).   In this sense the loss of symmetry (i.e., regularity of the tree) stabilizes the absolutely continuous spectrum.

In a companion work the results of this paper will be used to tackle stability under random perturbations \cite{KLW2}.

Let us put our models and results in perspective. Our basic aim is to study absolutely continuous spectrum and its persistence under (small)  perturbations for certain tree models.

Of course, generically in a topological sense, families of self-adjoint operators  tend to lack an absolutely continuous component in their spectra~\cite{Si} (see also \cite{LS}). More specifically,   in the context of trees,  Breuer \cite{Br} and Breuer/Frank \cite{BF} proved that absolutely continuous spectrum does typically not occur for certain radial tree operators (i.e., in an essentially  one dimensional situation).

Our class of  tree models (and potentials) is  characterized by a certain type of symmetry  condition vaguely reminiscent of a form of periodicity. Thus,  we are in a very non-generic situation and  our stability result does not contradict generic absence of absolutely continuous spectrum.

In the case of vanishing potential, our tree models  have already attracted a lot of attention  in the context  of  random walks under the name of periodic trees or trees with finite cone type \cite{Ly,Tak,NW,Mai} (see \cite{Kro,KT} for related material as well). These works deal with questions such as  recurrence and transience of  random walks. This amounts to studying the limiting behavior of  resolvents at a single energy viz.\ the infimum of the spectrum. Our situation is substantially more complicated as we have to control the limiting behavior for all spectral energies and at the same time  also allow for a potential.


One can also think of our trees as arising as codings of all paths with a fixed initial point in a finite directed graph (the substitution graph). In this sense, our trees are coverings of directed finite graphs.  For abelian covering of finite graphs, absolutely continuous spectrum has recently been shown by Higuchi/Nomura \cite{HN} by means of Fourier/Bloch type analysis. As ours is not an abelian covering such an analysis does not seem to be at our disposal. Still the results  bear a remarkable resemblance.  This is an interesting phenomenon which may be worth further exploration.

Finally,  there is a strong   interest in  absolutely continuous spectrum for trees from the point of view of random Schr\"odinger operators. Such  operators have extensively been studied in the past (see e.g. the monographs \cite{CL,PF,Sto} for details and  further literature). They exhibit localization, i.e., pure point spectrum (under suitable assumptions on the random potential). It  is generally believed that such random operators will also allow for some absolutely continuous spectrum in sufficiently high dimensions. So far, this could not be proven in finite dimensions. However, a remarkable result of Klein \cite{Kl1,Kl2} shows that in the infinite dimensional situation (i.e., for regular trees) one does indeed have persistence of absolutely continuous spectrum. Recent years have seen a new interest in Klein's result. In fact two alternative approaches to his result have been given by Froese/Hasler/Spitzer \cite{FHS1,FHS2} and Aizenman/Sims/Warzel \cite{ASW} respectively. 
Our approach opens the possibility to deal with a substantially larger class of trees. Details will be given in \cite{KLW2}.

The paper is organized as follows. In Section~\ref{s:ModelAndResults} we introduce our model and present the results. Basic properties of the resolvents of the operators are studied in Section~\ref{s:Greenfunction}. In particular the recursion relations of resolvents plays a crucial role in the analysis. These can be translated into a system of polynomial equations, see Section~\ref{s:Polynomial}, and into a fixed point equation of a recursion map, see Section~\ref{s:RecursionMap}.
Finally, using these viewpoints on the recursion relations, the proofs of our main results are given in the last section.


\section{Models and Results}\label{s:ModelAndResults}

A tree is a connected graph without loops.  In a tree with a distinguished vertex called
the root~$o$, the vertices can be ordered according to spheres, i.e., the distance $|\cdot|=d(\cdot,o)$ to the root.

We are interested in special rooted trees generated as follows.
Let $\A$ be a finite set whose elements we call labels and consider a matrix
\begin{align*}
M : \A\times \A\longrightarrow \N_0,\quad(j,k) \longmapsto M_{j,k},
\end{align*}
which we call the substitution matrix. To each label $j\in \mathcal{A}$ we
construct inductively a tree $\TT = \TT (M,j)$ with vertex set $V$ and edges $E$ together with a labeling of the vertices, i.e.,
a function $a : V\longrightarrow \mathcal{A}$. The root of the
tree gets the label $j$.  Each vertex with label $k\in \mathcal{A}$ of the $n$-th sphere is joined by single non-directed edges to $M_{j,k}$ vertices with label $j$ of the $(n+1)$ sphere.

We assume that the tree is not one-dimensional, i.e., if $\A$ consists of one element such that $M$ is thus a natural number, this number is at least  $2$. Furthermore, we impose two conditions on $M$, viz.
\begin{itemize}
  \item [(M1)] $M_{j,j}\geq 1$ for all $j\in\A$ \emph{(positive diagonal)}.
  \item [(M2)] There exists $n=n(M)\in\N$ such that $M^{n}$ has positive entries \emph{(primitivity)}.
\end{itemize}

Let us give two examples of such trees.

\begin{ex} (1.) Assume $\A$ consists of only one element and $M$ is a natural number $k\ge2$. Then, $\T$ is a $k$-regular tree, i.e., a tree where each vertex has exactly $k$ forward neighbors.

(2.) Let $\A=\{\circ,\bullet\}$ and $M=\left(\begin{smallmatrix}
2 & 1 \\
1 & 1 \end{smallmatrix}
\right),
$ where the first row is associated to~$\circ$ and the second one to~ $\bullet$.
Figure~\ref{f:tree} illustrates the tree $\T=\T(M,\bullet)$.
\begin{figure}[!h]
\centering
\scalebox{0.15}{\includegraphics{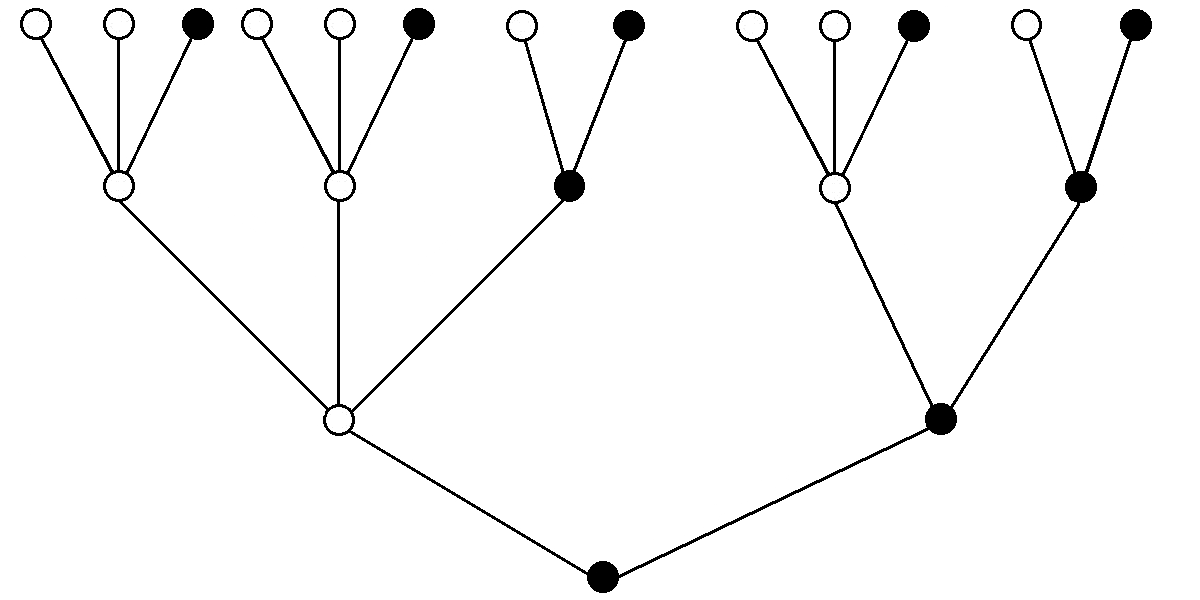}}
\caption{An example of a non-regular tree of finite cone type}\label{f:tree}
\end{figure}
\end{ex}

The topic of this paper are the spectral properties of operators of Laplace-type on $\ell^2(V)$. In order to ease the notation, we present the results and proofs for the case of the adjacency matrix which acts as
\begin{align}\label{e:Lp}
(\Lp \ph)(x):=\sum_{y\sim x}\ph(x),
\end{align}
where $y\sim x$ means that $y$ and $x$ are connected by an edge.
We stress that this choice is only for simplicity and our results also apply in case we include edge weights and/or potentials which are invariant with respect to the labeling (see \cite{Ke} for details).

Our first main  result completely describes  the spectrum of this operator.

\begin{thm} \label{main1}\emph{
There exist finitely many intervals such
that for every $j\in \mathcal{A}$ the spectrum of $\Lp$ associated with the tree $\TT (M,j)$ consists
of exactly these intervals and is purely absolutely continuous.}
\end{thm}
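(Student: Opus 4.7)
The plan is to exploit the self-similarity of $\T(M,k)$ at each vertex of label $k$ to reduce the spectral analysis to a finite system of algebraic equations for the ``forward'' Green's functions $\Gamma_k(z) = \langle \delta_{o_k}, (L_k - z)^{-1}\delta_{o_k}\rangle$, $k \in \A$, one per label. Removing the root of $\T(M,k)$ decomposes it into $M_{k,j}$ disjoint copies of $\T(M,j)$ for each $j \in \A$, so a Schur complement at the root gives
\[
\Gamma_k(z) \;=\; \frac{-1}{z + \sum_{j \in \A} M_{k,j}\, \Gamma_j(z)}, \qquad \Im z > 0,\ k \in \A.
\]
As established in Section~\ref{s:Polynomial} this is equivalent to a polynomial system, so each $\Gamma_k$ is algebraic; the recursion map of Section~\ref{s:RecursionMap} selects the distinguished Herglotz branch. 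Consequently $\Gamma_k$ extends continuously to $\Cp \cup (\R \setminus F)$ for a finite set $F \subset \R$ of branch points.

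Next, to obtain pure absolute continuity at every vertex, observe that the spectral measure of $L$ at $\delta_o$ in $\T(M,j)$ has density $\tfrac{1}{\pi}\Im\Gamma_j(E+i0)$ and is therefore purely absolutely continuous. For an arbitrary vertex $x$ with parent $p$ and forward neighbours $y_1,\ldots,y_m$, a further Schur complement yields
\[
G(x,x;z) \;=\; \frac{-1}{z + \sum_{i=1}^{m}\Gamma_{a(y_i)}(z) + \widetilde\Gamma_x(z)},
\]
where $\widetilde\Gamma_x(z)$ is the Green's function at $p$ of the tree obtained from $\T(M,j)$ by excising the forward cone at $x$. Iterating this construction along the path from $x$ up to the root expresses $\widetilde\Gamma_x$ by finitely many rational operations in the $\Gamma_k$; hence $G(x,x;\cdot)$ is algebraic with Herglotz boundary behaviour and produces a purely absolutely continuous spectral measure at $\delta_x$. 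Since $(\delta_x)_{x\in V}$ is an orthonormal basis of $\ell^2(V)$, the operator $L$ has purely absolutely continuous spectrum.

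For the band structure and label-independence, set $\Sigma_k := \{E \in \R : \Im\Gamma_k(E+i0) > 0\}$. Being the positivity set of an algebraic function continuous on $\R \setminus F$, $\Sigma_k$ is open with finite boundary, hence a finite union of open intervals. Primitivity~(M2) combined with the recursion shows that $\Sigma_k$ is independent of $k$: if $M_{k,j} > 0$ and $\Im\Gamma_j(E+i0) > 0$ then $\Im\Gamma_k(E+i0) > 0$, and chaining along a positive entry of some $M^n$ handles arbitrary pairs. On the complement of the common set $\Sigma$, all $\Gamma_k(E+i0)$ and hence all $\widetilde\Gamma_x(E+i0)$ are real, so $\Im G(x,x;E+i0) = 0$ there; together with $\operatorname{supp}\mu_{\delta_o} = \overline\Sigma$ this yields $\sigma(L_j) = \overline\Sigma$ for every $j$.

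The main obstacle is the algebraic analysis of the first paragraph: one must show that the Herglotz branch of the polynomial system extends continuously up to $\R$ off only a finite exceptional set, and that $\Im\Gamma_k(E+i0)$ is strictly positive on a nonempty open set (ruling out accumulation of branch points and degeneration of the branch on the boundary). This requires a Puiseux-type analysis of the system, and both conditions (M1) (positive diagonal) and (M2) (primitivity) enter essentially, the former preventing degenerate limits and the latter providing the coupling that makes the Herglotz solution unique.
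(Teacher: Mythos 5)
Your skeleton (Schur complement at the root, reduction to a finite polynomial system, positivity sets of $\Im\Gamma_k$ giving the bands, primitivity coupling the labels) follows the paper's setup. However, there are two places where the argument has a real gap, and you have in fact flagged the first one yourself in your last paragraph.

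The finiteness of the band set is not obtained in the paper by Puiseux/branch-point analysis of an algebraic function. Instead, the paper encodes $\Sigma_1 = \{E : P_j(E,\xi)=0 \text{ for some } \xi \in \h^{\A}\}$ as the projection of a real semi-algebraic set and applies Milnor's theorem on the Betti numbers of real varieties to conclude it has finitely many connected components. This avoids having to establish that the Herglotz branch extends continuously off a finite exceptional set as a matter of function theory, which is the step you admit you cannot supply. The paper does eventually need continuity of the boundary values, but it gets it from the contraction machinery (contraction of $\Phi_E^{n+1}$ on $\Sigma_1 \setminus \Sigma_0$ and the resulting uniqueness/stability of fixed points), not from algebraicity.

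More seriously, your claim that $G(x,x;\cdot)$ ``is algebraic with Herglotz boundary behaviour and produces a purely absolutely continuous spectral measure'' does not follow. An algebraic Herglotz function can perfectly well have point masses where the denominator of the relevant Schur complement vanishes, and ``continuous off a finite set $F$'' says nothing about what happens at $F$ itself. What actually rules this out in the paper is the a priori uniform bound of Lemma~\ref{l:Gmbounds}: from (M1) alone one gets $|\Gamma_j(z)| \leq 1/\sqrt{M_{j,j}}$ uniformly in $z \in \h$, and the matching lower bound; via Proposition~\ref{p:G2} this propagates to uniform boundedness of $\Im G_x(E+i\eta)$, which is what kills singular measure. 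In your write-up (M1) is only described as ``preventing degenerate limits,'' but the quantitative uniform bound is the substantive input and is missing. Similarly, your final step, that $\Im G(x,x;E+i0)=0$ on the complement of $\Sigma$ plus ``$\operatorname{supp}\mu_{\delta_o} = \overline\Sigma$'' forces $\sigma(L_j) = \overline\Sigma$, needs the same boundedness to exclude eigenvalues supported on the boundary of $\Sigma$ (a single exceptional energy, handled in the paper via $\Sigma_0 \subseteq \{0\}$ and the uniform bound).

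So: right skeleton, but the two load-bearing ingredients are (i) Milnor's semi-algebraic finiteness theorem in place of the unsupplied Puiseux analysis, and (ii) the quantitative uniform bound on $\Gamma$ from (M1), which is what actually makes the spectrum purely absolutely continuous. As stated, both of these are missing from the proposal.
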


It is well known that  for $\Lp$ on a one-dimensional tree $\T$, i.e., $\Lp$ on $\ell^2(\N)$, the theorem remains true. However, the other assumptions (M1) and (M2) are vital as the following counter example shows.

\begin{ex} A tree constructed from $M$ which does not satisfy $M_{j,j} \geq 1$ may have eigenvalues: Consider for example a tree with two labels and the substitution matrix $M=\left(\begin{smallmatrix}
0 & 2 \\
1 &0  \end{smallmatrix}
\right)$. Then, ${L}$ possesses an eigenfunction corresponding to the eigenvalue $0$. This eigenfunction vanishes on the even spheres and has values $+\frac{1}{2^k}$ and $-\frac{1}{2^k}$ at the vertices of the $2 k +1$ spheres.
\end{ex}

Next, we introduce a class of potentials on $\T$ for which vertices  with the same label and with the same distance $ | \cdot | $ to the root get the same value. More specifically,  we call a  function $v:V\to\R$  \emph{radially label symmetric} if $a(x)=a(y)$ and $|x|=|y|$ implies $v(x)=v(y)$ for all $x,y\in V$.

Our main result is the stability of the absolutely continuous spectrum of the Laplacian under small perturbations by such radially label symmetric potentials in case of non regular trees.

As usual a regular tree is a tree where every vertex has the same number of forward neighbors.

\begin{thm}\label{main2}\emph{
Assume that $\T$ is a non regular tree. Then, for every compact set $I$ contained in the interior of  $\si(\Lp)\setminus\{0\}$ there exist $\lm>0$ such that
for all radially label symmetric $v:V\to[-1,1]$  we have
$$I\subseteq \si_{\mathrm{ac}}(\Lp+\lm v)\qqand I\cap \si_{\mathrm{sing}}(\Lp+\lm v)=\emptyset.$$}
\end{thm}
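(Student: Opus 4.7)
The plan is a Green function / recursion map analysis in the spirit of Klein \cite{Kl1,Kl2} and Aizenman--Sims--Warzel \cite{ASW}, exploiting the finite system of recursion relations afforded by the finite cone type structure and the radial label symmetry of $v$. For a vertex $x$ of label $j$ in sphere $n$, let $\Gm_{j,n}^{\lm}(z):=\langle\de_x,(\Lp_{x}^{+}+\lm v - z)^{-1}\de_x\rangle$ where $\Lp_{x}^{+}$ is the restriction of $\Lp$ to the forward subtree at $x$; by radial label symmetry this depends only on $(j,n)$. Cutting the edges from $x$ to its forward neighbors and applying the resolvent identity (Section~\ref{s:Greenfunction}) yields the recursion
\[
\Gm_{j,n}^{\lm}(z)^{-1} \;=\; -z + \lm v_{j,n} \;-\; \sum_{k\in\A} M_{j,k}\,\Gm_{k,n+1}^{\lm}(z),
\]
defining for each $n$ a map $\Phi_{z,n}^{\lm}\colon\h^{\A}\to\h^{\A}$. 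Standard limiting absorption theory reduces the two claims $I\subseteq\si_{\mathrm{ac}}(\Lp+\lm v)$ and $I\cap\si_{\mathrm{sing}}(\Lp+\lm v)=\emptyset$ to a uniform lower bound $\Im\,\Gm_{j,n}^{\lm}(E+i\eta)\geq c'>0$ for all $E\in I$, $j\in\A$, $n\in\N$ and sufficiently small $\eta>0$.

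Theorem~\ref{main1} together with the analyses of Sections~\ref{s:Polynomial}--\ref{s:RecursionMap} supplies the unperturbed fixed point. For $\lm=0$ the recursion is $n$-independent and admits a fixed point $\vec\Gm^{0}(z)\in\h^{\A}$ extending continuously to $E\in\mathrm{int}(\si(\Lp))$ with $\Im\,\Gm^{0}_{j}(E)>0$; hence on the compact set $I$ one has $c:=\min_{E\in I,\,j\in\A}\Im\,\Gm^{0}_{j}(E)>0$. The Jacobian of $\Phi^{0}_{E}$ at $\vec\Gm^{0}(E)$ has entries $M_{j,k}\,(\Gm^{0}_{j}(E))^{2}$, and in the Poincar\'e product metric on $\h^{\A}$ the associated linear operator is given by the non-negative matrix with entries $M_{j,k}\,|\Gm^{0}_{j}|^{2}\,\Im\,\Gm^{0}_{k}/\Im\,\Gm^{0}_{j}$.

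The decisive and hardest step is to show that for \emph{non regular} $\T$ the Perron--Frobenius eigenvalue of this Poincar\'e Jacobian is strictly smaller than one, uniformly on $I$. In the $k$-regular case the components of $\vec\Gm^{0}$ all coincide, the derivative collapses to the scalar $k\,(\Gm^{0})^{2}$, and its Poincar\'e modulus is identically $1$ on $\si(\Lp)$; this is precisely why arbitrarily small radially label symmetric perturbations can destroy the absolutely continuous spectrum of a regular tree. Non regularity forces the components of $\vec\Gm^{0}$ to be genuinely distinct, and combined with primitivity (M2) a Perron--Frobenius argument should produce a strict gap below $1$ of the spectral radius throughout the bulk of $\si(\Lp)$. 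The gap necessarily degenerates at the band edges (hence the hypothesis $I\subset\mathrm{int}(\si(\Lp))$) and at the resonance energy $0$ (excluded by assumption, in line with the counter-example following Theorem~\ref{main1}). This genuinely multi-dimensional hyperbolic contraction estimate is the principal obstacle.

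Once this contractivity is secured, a standard perturbation argument completes the proof. Pick a Poincar\'e ball $B$ of small radius $r>0$ around $\vec\Gm^{0}(E)$ and choose $\lm_0>0$ and $\eta_0>0$ so that, for all $|\lm|\leq\lm_0$, $\eta\in[0,\eta_0]$, $E\in I$ and $n\in\N$, the map $\Phi^{\lm}_{E+i\eta,n}$ is a strict Poincar\'e contraction of $B$ into itself, uniformly in these parameters. Using truncations of $\T$ at depth $N$ as initial data and sending $N\to\infty$, backward iteration then forces $\vec\Gm^{\lm}_{n}(E+i\eta)\in B$ for every $n$, whence $\Im\,\Gm^{\lm}_{j,n}(E+i\eta)\geq c/2$. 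Letting $\eta\to 0^{+}$ and invoking the Borel-transform criterion for the spectral type of the diagonal Green function at the root yields both $I\subseteq\si_{\mathrm{ac}}(\Lp+\lm v)$ and $I\cap\si_{\mathrm{sing}}(\Lp+\lm v)=\emptyset$.
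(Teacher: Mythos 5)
Your overall strategy --- recursion maps for the truncated Green functions, a uniform contraction near the unperturbed fixed point, and a limiting absorption argument --- is exactly the paper's strategy (Propositions~\ref{p:Gm}--\ref{p:FHS}, Theorems~\ref{t:Contraction} and~\ref{t:continuity}). However, the step you yourself flag as ``the decisive and hardest step'' contains a concrete error, and as formulated the proposed Perron--Frobenius argument cannot work.

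You compute the complex Jacobian $J_{j,k} = M_{j,k}\,(\Gm^{0}_{j}(E))^{2}$ correctly, but then pass to the non-negative matrix
\[
P_{j,k} \;=\; M_{j,k}\,|\Gm^{0}_{j}|^{2}\,\frac{\Im\,\Gm^{0}_{k}}{\Im\,\Gm^{0}_{j}}
\]
and claim its Perron--Frobenius eigenvalue is strictly smaller than one. This is false: taking imaginary parts in the fixed point equation $h=\Phi_E(h)$ gives $\Im h_{j} = |h_{j}|^{2}\sum_{k} M_{j,k}\,\Im h_{k}$ (cf.\ Lemma~\ref{l:Gmbounds}), and hence $\sum_{k} P_{j,k} = 1$ for every $j$. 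Thus $P$ is row-stochastic, its Perron--Frobenius eigenvalue is exactly $1$ whether or not $\T$ is regular, and no spectral gap can be extracted from $P$. This is not a small defect: $P$ is precisely the matrix one gets by applying the Schwarz--Pick bound componentwise, and it only records the statement that $\Phi_E$ is a \emph{quasi}contraction, which is true in all cases (Lemma~\ref{l:contraction}). The strict contraction you need has to come from the \emph{phase} content that you discard when you pass from $J$ to $|J|$.

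This is exactly what the paper's argument supplies. Lemma~\ref{l:Sigma0} shows that for a non-regular tree and $E\neq 0$ the fixed point components cannot all have the same argument (i.e.\ $\arg\big(h_{j}\ov{h_{k}}\big)\neq 0$ for some $j,k$). In the decomposition $\Phi_{\zeta}=\rho\circ\si_{\zeta}\circ\tau$ of Lemma~\ref{l:contraction}, the only strict contraction in the limit $\Im z\downarrow 0$ comes from $\tau$, and the explicit formula in Lemma~\ref{l:contraction}~(3.) shows that the gain is governed by $\cos\al_{k,l}(g,h)$, where $\al_{k,l}(g,h)=\arg\big((g_k-h_k)\ov{(g_l-h_l)}\big)$. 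Lemma~\ref{l:taual} and Lemma~\ref{l:al} then use $\arg\big(h_{j}\ov{h_{k}}\big)\neq 0$ to show that the iterates cannot persistently align their phases with the dangerous directions, and primitivity (M2) propagates the resulting loss through all coordinates, giving the $(n+1)$-step contraction of Theorem~\ref{t:Contraction}. If you insist on a linear-algebraic formulation, you would have to prove $\rho(J)<1$ for the complex matrix $J=\mathrm{diag}(h_{j}^{2})\,M$ itself (where indeed a modulus-one eigenvalue forces all $\arg h_{j}$ to coincide, by a rigidity argument through the primitive stochastic matrix $P$); but the non-negative matrix $P$ alone is insensitive to this, and the paper chooses the direct geometric route rather than the spectral one. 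So the missing ingredient is precisely the angle analysis of Section~\ref{s:RecursionMap}: once you have it, your perturbation and limiting absorption steps go through as sketched.
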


\textbf{Remarks.}  (a) For regular trees there are potentials $v:V\to[-\lm,\lm]$ which destroy the absolutely continuous spectrum of $\Lp$ completely no matter how small we choose $\lm$.
Examples of such potentials are radially symmetric ones, where the common value of the potential in each sphere is a random variable. Their absolutely continuous spectrum coincides with that of the associated one-dimensional operator and therefore vanishes almost surely \cite{CL,PF}.  As is shown in the above theorem if we exclude regular trees then an arbitrary large part of the absolute continuous spectrum is stable for small radially label symmetric perturbations.

(b) The previous theorem assumes some symmetry of the potential. It may be interesting to study whether this symmetry assumption is indeed necessary for persistence of absolutely continuous spectrum.


(c) For more general operators than $\Lp$, namely those whose edge weights and the diagonal are label invariant, a similar statement holds. There, we have to exclude a finite set of energies instead of only $0$ from $\si(\Lp)$. For details we refer the reader to  \cite{Ke}.

Finally we consider radial label symmetric potential which vanish at infinity.

\begin{thm}\label{main3}\emph{
Assume that $\T$ is a non regular tree. Then, for every radially label symmetric $v$ with $v(x)\to0$ as $|x|\to\infty$ we have
$$\si_{\mathrm{ac}}(\Lp+ v)= \si_{\mathrm{ac}}(\Lp)$$}
\end{thm}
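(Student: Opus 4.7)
\textbf{Proof proposal for Theorem~\ref{main3}.}

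The plan is to prove the two inclusions of the equality $\si_{\mathrm{ac}}(\Lp+v)=\si_{\mathrm{ac}}(\Lp)$ separately. For the easy direction $\si_{\mathrm{ac}}(\Lp+v)\subseteq\si_{\mathrm{ac}}(\Lp)$, observe that since $v(x)\to 0$ on the countable set $V$, multiplication by $v$ is compact on $\ell^2(V)$ (it is a norm limit of its truncations to finite balls, which are finite rank). Hence Weyl's theorem gives $\si_{\mathrm{ess}}(\Lp+v)=\si_{\mathrm{ess}}(\Lp)$, and by Theorem~\ref{main1} the right-hand side equals $\si(\Lp)=\si_{\mathrm{ac}}(\Lp)$. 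Since absolutely continuous spectrum is always contained in essential spectrum, this inclusion follows.

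For the reverse inclusion $\si_{\mathrm{ac}}(\Lp)\subseteq\si_{\mathrm{ac}}(\Lp+v)$, recall that $\si(\Lp)$ is a finite union of closed intervals by Theorem~\ref{main1}, so $\mathrm{int}(\si(\Lp))\setminus\{0\}$ is dense in $\si(\Lp)$. Since absolutely continuous spectrum is always closed, it suffices to show that every compact set $I\subseteq\mathrm{int}(\si(\Lp))\setminus\{0\}$ lies in $\si_{\mathrm{ac}}(\Lp+v)$. Fix such an $I$, and write $\Lp^{(j)}$ for the adjacency operator on $\T(M,j)$. Theorem~\ref{main1} applied to each $j\in\A$ gives $\si(\Lp^{(j)})=\si(\Lp)$, so $I\subseteq\mathrm{int}(\si(\Lp^{(j)}))\setminus\{0\}$ for every $j$. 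Moreover, regularity of $\T(M,j)$ is equivalent to all column sums of $M$ being equal (primitivity (M2) ensures that every label actually occurs in each $\T(M,j)$), so non-regularity of $\T$ propagates to every $\T(M,j)$. Applying Theorem~\ref{main2} to each of the finitely many labels therefore produces a single $\lm>0$ such that for every $j\in\A$ and every radially label symmetric function $w$ on $\T(M,j)$ taking values in $[-\lm,\lm]$ one has $I\subseteq\si_{\mathrm{ac}}(\Lp^{(j)}+w)$.

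Now choose $N$ large enough that $|v(x)|\le\lm$ for all $|x|\ge N$, and let $\tilde\Lp$ be the operator obtained from $\Lp$ by deleting the finitely many edges between the sphere of radius $N-1$ and the sphere of radius $N$. Then $\Lp+v$ and $\tilde\Lp+v$ differ by a finite rank, hence trace class, operator, and the Kato--Rosenblum theorem yields $\si_{\mathrm{ac}}(\Lp+v)=\si_{\mathrm{ac}}(\tilde\Lp+v)$. Removing these edges decouples the tree into the finite ball of radius $N-1$ (contributing only a finite pure point spectrum) and, for each vertex $x$ in the $N$-sphere, the forward subtree $T_x$. As a labeled rooted tree $T_x$ is canonically isomorphic to $\T(M,a(x))$, and the restriction $v|_{T_x}$ remains radially label symmetric, because distances in $T_x$ from the new root $x$ agree with distances in $\T$ from $o$ minus the constant $N$. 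Since $v|_{T_x}$ takes values in $[-\lm,\lm]$ by the choice of $N$, the previous paragraph gives $I\subseteq\si_{\mathrm{ac}}(\Lp^{(a(x))}+v|_{T_x})$ for each $x$ in the $N$-sphere, and taking direct sums yields $I\subseteq\si_{\mathrm{ac}}(\tilde\Lp+v)=\si_{\mathrm{ac}}(\Lp+v)$, as required.

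The main point to verify carefully is the subtree reduction: that restrictions of radially label symmetric functions are again radially label symmetric on the subtree, that a single coupling $\lm$ is adequate for all the finitely many subtree types $\T(M,j)$, and that non-regularity passes from $\T$ to each $\T(M,j)$. None of these is deep; all the conceptual work is done by the already-proved Theorems~\ref{main1} and~\ref{main2} together with Weyl's theorem and the Kato--Rosenblum theorem.
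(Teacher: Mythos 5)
Your proof is correct, and both directions rest on the same underlying facts as the paper's: compactness of multiplication by a potential vanishing at infinity for the easy inclusion, and stability of absolutely continuous spectrum under trace class perturbations together with Theorem~\ref{main2} for the hard one. The organization of the hard direction is, however, genuinely different. The paper modifies the \emph{potential}: it replaces $v$ by the truncated $v'(x)=v(x)\mathbf 1_{|v(x)|<\lambda}$, which differs from $v$ on a finite set (hence by a finite rank operator), is still radially label symmetric, and takes values in $[-\lambda,\lambda]$, so Theorem~\ref{main2} applies directly to $\Lp+v'$ on the original tree. You instead modify the \emph{operator}: you delete the edges between spheres $N-1$ and $N$, decouple into a finite ball plus the forward subtrees $\T_x\cong\T(M,a(x))$, and apply Theorem~\ref{main2} to each subtree. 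Your route requires the extra observations that radial label symmetry and non-regularity pass to forward subtrees, that the spectra $\si(\Lp^{(j)})$ agree for all $j$ (Theorem~\ref{main1}), and that one $\lambda$ can be chosen uniformly over the finitely many labels -- all correct, and you flag them explicitly -- but the paper's potential truncation avoids this bookkeeping entirely by staying on one tree. A small remark: regularity is governed by the \emph{row} sums of $M$ in the paper's convention (cf.\ the sentence preceding Lemma~\ref{l:Sigma0}), not the column sums as you wrote, though this does not affect your argument since the substance -- that non-regularity propagates to every $\T(M,j)$ because every label occurs in every $\T(M,j)$ by primitivity -- is correct.
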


\textbf{Remark.}  Note that we do not need any assumptions on the decay rate of $v$.  We only need that the potential becomes arbitrary small eventually in order to have full preservation of absolutely continuous spectrum. This stands in strong contrast to the one dimensional situation, see e.g. \cite{La} and references therein, or  the situation of regular trees, cf. \cite{D,Br,Ku}.

\section{The recursion formulas of the Green function}\label{s:Greenfunction}
In this section we introduce the Green function of the operator $\Lp$. Moreover, we recall some basic properties such as the recursion formulas for the truncated version of the Green function.

The statements of the first two  subsections are proven for arbitrary rooted trees $\T=(V,E)$, where the operator $\Lp$ acting as \eqref{e:Lp} is bounded on $\ell^2(V)$. This is exactly the case whenever the number of forward neighbors is uniformly bounded in the vertices.  In the last subsection we restrict attention to trees of finite cone type.

\smallskip

For a bounded function $v:V\to\R$ we denote the operator of multiplication also by $v$ and we let
\begin{align*}
    H:=\Lp+v:\ell^{2}(V)\to\ell^2(V),\quad (H\ph)(x)=\sum_{y\sim x}\ph(y)+v(x)\ph(x).
\end{align*}

\subsection{The Green function of the tree}
The spectral measure associated with the characteristic function $\de_x$ of a vertex $x\in V$ is denoted by $\mu_{x}$. Its Borel transform, i.e., the Green function $G_{x}(z,H)$ of $H$ in $x$ is defined on the  upper half plane
$$\h :=\{z\in \C : \Im z>0 \}$$
by
$$G_{x} (z,H):=\int \frac{1}{t-z} d\mu_{x} (t) = \langle \frac{1}{H-z} \delta_x,\delta_x\rangle.$$
Note that the spectral measures $\mu_{x}$ are the vague limits of $\Im G_{x}(E+i\eta,H) dE$ as $\eta\downarrow 0$. 
So, whenever $\Im G_{x}(E+i\eta,H)$ remains bounded for all  energies $E$ in some interval as $\eta\downarrow 0$, the spectral measures $\mu_{x}$ is purely absolutely continuous.

An effective  way for its computation is to look at the Green function of truncated trees at some vertex. We denote by $\T_x$ the forward tree of a vertex $x\in V$ with respect to the root of $\T$, i.e., if one deletes the unique edge that is adjacent to $x$ in the path connecting $x$ to the root of $\T$, then $\T_{x}$ is the connected component that contains $x\in V$. We define the truncated resolvents or Green functions by
\begin{align*}
    \Gm_{x} (z,H)= \langle \frac{1}{H_{\T_{x}}-z} \delta_x,\delta_x\rangle,
\end{align*}
where $H_{\T_{x}}$ denotes the restriction of $H$ to $\ell^2(\T_{x})$. It can easily  be checked that $G_{x}(\cdot,H)$ and $\Gm_{x}(\cdot,H)$ map $\h$ to $\h$. Moreover, $\Gm_{x}(z,H)=G_{x}(z,H_{\T_{x}})$ for $x\in V$ and thus $\Gm_{o}(z,H)=G_{o}(z,H)$ for the root vertex $o$ of $\T$.

\subsection{Recursion formulas}

The truncated resolvents $\Gm_{x}(z,H)$, $x\in V$, obey a recursion relation. This will be discussed next and is already found in a similar form in \cite{Kl1,ASW,FHS1,FHS2}. For a tree $\T$ with root $o$ and $x\in V$ we define
\begin{align*}
    S_{x}:=\{y\in V\mid y\sim x,\; |y|=|x|+1\}.
\end{align*}

\begin{prop}\label{p:Gm}(Recursion formula I) Let $\T$ be a rooted tree. Then, for $z\in\h$, $x\in V$,
\begin{align}\label{e:Gm}
-\frac{1}{\Gm_{x}(z,H)}=z-v(x)+\sum_{y\in S_{x}}\Gm_{y}(z,H).
\end{align}
\begin{proof}Let $\Lambda$ be the self-adjoint operator which connects $x\in V$ to its forward neighbors, i.e., $\ip{\Lambda\de_x,\de_y}=\ov{\ip{\Lambda\de_y,\de_x}}=1$ for $y\in S_{x}$ and all other matrix elements vanish. Then, $H':=H-\Lambda$ is a direct sum of operators and the resolvent identity applied twice yields
\begin{align*}
\frac{1}{H-z}=\frac{1}{H'-z} -\frac{1}{H'-z}\Lambda\frac{1}{H'-z} +\frac{1}{H'-z}\Lambda\frac{1}{H'-z}\Lambda\frac{1}{H-z}.
\end{align*}
Taking matrix elements we conclude \eqref{e:Gm}.
\end{proof}
\end{prop}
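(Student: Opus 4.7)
The plan is to exploit the block structure of $H_{\T_x}$ induced by the orthogonal decomposition
\[
\ell^2(\T_x) \;=\; \C\,\de_x \;\oplus\; \bigoplus_{y\in S_{x}} \ell^2(\T_y),
\]
which is available because deleting the vertex $x$ disconnects $\T_x$ into its forward subtrees $\T_y$, $y\in S_x$. With respect to this decomposition, $H_{\T_x}-z$ has a two-by-two block form whose diagonal is $(v(x)-z) \oplus \bigoplus_{y\in S_x}(H_{\T_y}-z)$ and whose off-diagonal blocks implement the single couplings $\de_x\leftrightarrow\de_y$ for each $y\in S_x$; crucially, there is no direct coupling between different forward subtrees, because any path inside $\T_x$ joining distinct $\T_y$'s must pass through $x$.

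Given this structure, I would apply the Schur complement (Feshbach) formula to read off the $(\de_x,\de_x)$ matrix element of the resolvent. For a self-adjoint block operator with diagonal blocks $A$, $D$ and off-diagonal $B$, $B^*$, the $(1,1)$-block of the inverse equals $(A-BD^{-1}B^*)^{-1}$ whenever $D$ and the Schur complement are invertible. In the present setting $A = v(x)-z$ is a scalar, $D^{-1} = \bigoplus_{y\in S_x}(H_{\T_y}-z)^{-1}$, and $B$ sends $\de_x$ to $\sum_{y\in S_x}\de_y$. A short block computation then yields
\[
\ip{BD^{-1}B^* \de_x,\de_x} \;=\; \sum_{y\in S_x}\ip{(H_{\T_y}-z)^{-1}\de_y,\de_y} \;=\; \sum_{y\in S_x}\Gm_y(z,H),
\]
which gives $\Gm_x(z,H)^{-1} = v(x)-z - \sum_{y\in S_x}\Gm_y(z,H)$; rearranging produces the stated identity.

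The only real obstacle is the routine verification that the Schur complement is well-defined on $\h$. Invertibility of $H_{\T_y}-z$ for $z\in\h$ is automatic from self-adjointness, and the scalar $v(x)-z-\sum_{y\in S_x}\Gm_y(z,H)$ is nonzero because, as already recorded in the excerpt, each $\Gm_y$ maps $\h$ into $\h$; hence the imaginary part of this scalar is strictly negative. An entirely equivalent route, avoiding any abstract block-matrix reference, is to write $H_{\T_x} = H' + \Lambda$ with $H'$ the block-diagonal part above and $\Lambda$ implementing the couplings $\de_x\leftrightarrow\de_y$, and then iterate the second resolvent identity twice and take the $(\de_x,\de_x)$ matrix element, using that $\Lambda\de_x\in\bigoplus_{y\in S_x}\ell^2(\T_y)$ is orthogonal to $\C\de_x$.
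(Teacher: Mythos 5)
Your argument is correct and takes essentially the same route as the paper: both rest on the orthogonal decomposition $\ell^2(\T_x)=\C\de_x\oplus\bigoplus_{y\in S_{x}}\ell^2(\T_y)$, the paper carrying out the computation via two applications of the resolvent identity (which you yourself identify as an equivalent route at the end), while you package the identical computation as a Schur complement. One small notational slip: with your stated convention that the $(1,1)$-block of the inverse is $(A-BD^{-1}B^{*})^{-1}$, it is $B^{*}$, not $B$, that maps $\de_x$ to $\sum_{y\in S_{x}}\de_y$; the resulting identity $\langle BD^{-1}B^{*}\de_x,\de_x\rangle=\sum_{y\in S_{x}}\Gm_y(z,H)$ and the conclusion are of course unaffected.
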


A variant of this reasoning allows one to relate $G_{x}(z,H)$ and $\Gm_{x}(z,H)$.\medskip

\begin{prop}\label{p:G}(Recursion formula II) Let $\T$ be a rooted tree. Then, for $z\in\h$, $x\in V$ and $y\in S_{x}$
\begin{align*}
G_{y}(z,H)={\Gm_{y}(z,H)}+\Gm_{y}(z,H)^{2}G_{x}(z,H).
\end{align*}
\begin{proof}Applying the resolvent identity twice, one sees
\begin{align*}
\frac{1}{H-z}=\frac{1}{H'-z} -\frac{1}{H'-z}\Lambda\frac{1}{H'-z} +\frac{1}{H'-z}\Lambda\frac{1}{H-z}\Lambda\frac{1}{H'-z}
\end{align*}
and the statement follows.
\end{proof}
\end{prop}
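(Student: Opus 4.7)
The strategy parallels the proof of Proposition~\ref{p:Gm}. Introduce the self-adjoint operator $\Lambda$ with $\Lambda\de_{x}=\sum_{y'\in S_{x}}\de_{y'}$, $\Lambda\de_{y'}=\de_{x}$ for $y'\in S_{x}$, and zero otherwise, so that $H':=H-\Lambda$ corresponds to deleting the edges between $x$ and $S_{x}$. The twist is that we now evaluate the resulting expansion at the vertex $y$ rather than at $x$.

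The crucial structural fact is that $H'$ is a direct sum of operators, one block of which coincides with $H_{\T_{y}}$ acting on $\ell^{2}(\T_{y})$. Consequently,
\begin{align*}
\langle \de_{y},(H'-z)^{-1}\de_{y}\rangle=\Gm_{y}(z,H)
\qand
\langle \de_{y},(H'-z)^{-1}\de_{x}\rangle=0,
\end{align*}
the second equality because $x$ and $y$ sit in different invariant blocks. Iterating the resolvent identity twice (once to pull $(H-z)^{-1}$ into $(H'-z)^{-1}$ on the left, and once to expand the remainder) gives
\begin{align*}
\frac{1}{H-z}=\frac{1}{H'-z}-\frac{1}{H'-z}\Lambda\frac{1}{H'-z}+\frac{1}{H'-z}\Lambda\frac{1}{H-z}\Lambda\frac{1}{H'-z}.
\end{align*}

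I would then take the $(y,y)$ matrix element of each summand. A short bookkeeping argument based on the block structure yields, for every $g\in\ell^{2}(V)$, the identity $\langle \de_{y},(H'-z)^{-1}\Lambda g\rangle=\Gm_{y}(z,H)\,g(x)$, because the only vertex in $\T_{y}$ on which $\Lambda g$ is supported is $y$ itself, with value $g(x)$. The first summand contributes $\Gm_{y}(z,H)$. The middle summand, with $g=(H'-z)^{-1}\de_{y}$, vanishes since $g(x)=\langle \de_{x},(H'-z)^{-1}\de_{y}\rangle=0$. For the last summand one first computes $\Lambda(H'-z)^{-1}\de_{y}=\Gm_{y}(z,H)\de_{x}$, so that $g=\Gm_{y}(z,H)(H-z)^{-1}\de_{x}$ and $g(x)=\Gm_{y}(z,H)G_{x}(z,H)$; the summand therefore equals $\Gm_{y}(z,H)^{2}G_{x}(z,H)$. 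Summing the three contributions yields the claimed identity. The argument reduces to elementary linear algebra once the direct-sum structure of $H'$ is in place, so no genuine obstacle is anticipated; the only care required is to verify the decomposition of $H'$ and the action of $\Lambda$ on vectors supported in $\T_{y}$ or at $x$.
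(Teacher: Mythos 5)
Your proof is correct and takes essentially the same route as the paper: you introduce the same coupling operator $\Lambda$, invoke the same twice-iterated resolvent identity, and then take the $(\de_y,\de_y)$ matrix element. The only difference is that you spell out the block-structure bookkeeping that the paper leaves implicit in "and the statement follows," and your accounting of the three terms is accurate.
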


We can draw the following consequences from these formulas.

\begin{prop}\label{p:G2} Let $E\in\R$.

$\mathrm{(1.)}$ If $\Gm_{x}(E+i\eta, H)$ is uniformly bounded in $\eta>0$ for all $x\in V$, then $G_{x}(E+i\eta,H)$ is uniformly bounded in $\eta>0$ for all $x\in V$.

$\mathrm{(2.)}$ If $\Gm_{x}(E+i\eta,H)$ is uniformly bounded in $\eta>0$ and  $\lim_{\eta\downarrow0}\Im\Gm_{x}(E+i\eta,H)=0$ for all $x\in V$, then     $\lim_{\eta\downarrow0}\Im G_{x}(E+i\eta,H)=0$ for all $x\in V$.

$\mathrm{(3.)}$ If $\Gm_{x}(E,H):=\lim_{\eta\downarrow0}\Gm_{x}(E+i\eta,H)$ exists and $\Im \Gm_{x}(E,H)>0$  for all $x\in V$, then $G_{x}(E,H):=\lim_{\eta\downarrow0}G_{x}(E+i\eta,H)$ exists and $\Im G_{x}(E,H)>0$ for all $x\in V$.
\begin{proof}
(1.), (2.) and the statement about the existence of the limits in (3.) directly  follow from Proposition~\ref{p:G} by induction on the distance to the root.

It remains to show the statement in (3.) about positivity of the imaginary parts. For $x\in V$ let $x_0\sim x$ be the unique vertex on the path connecting $x$ with the root $o$. Applying the recursion relation \eqref{e:Gm} to the tree in which one singles out $x$ as a root, one obtains
\begin{align*}
-\frac{1}{G_{x}(z,H)}=z-v(x)+\sum_{y\in S_x}
\Gm_{y}(z,H) +G_{x_0}(z,H_{\T\setminus \T_{x}}),
\end{align*}
for $z=E+i\eta\in\h$.
We estimate $\Im G_{x_0}(z,H_{\T\setminus \T_{x}})>0$, go to the limit $\eta\downarrow0$, take imaginary parts and multiply by $|G_{x}(E,H)|^{2}$ to get
\begin{align*}
\Im G_{x}(E,H) \geq
|G_{x}(E,H)|^{2}{\sum_{y\in S_x}\Im\Gm_{y}(E,H)}.
\end{align*}
To conclude positivity of the left hand side, we have to show $|G_{x}(E,H)|>0$. To see this we apply the recursion formula \eqref{e:Gm} to $G_{x_0}(z,H_{\T\setminus \T_{x}})$ (with respect to the  tree $\T$ with root $x_0$) in the first equation of the proof. We take the modulus, take the limit $\eta\downarrow0$  and  obtain
\begin{align*}
\frac{1}{|G_{x}(E,H)|}\leq\mo{E-v(x)}+\sum_{y\in S_x}|\Gm_{y}(E,H)| +\frac{1}{\Im\Gm_{y_0}(E,H)},
\end{align*}
where we estimated the denominator of the last term first by its imaginary part and then dropped all but one term for some vertex  $y_0 \neq x $ neighboring $ x_0 $. Since we assumed $\Im \Gm_{y}(E,H)>0$ for all $y\in V$ the statement follows.
\end{proof}
\end{prop}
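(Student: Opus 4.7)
The plan is to handle the three parts in a unified way: use the second recursion formula (Proposition~\ref{p:G}) to propagate boundedness and vanishing imaginary parts outward from the root by induction on $|x|$, and then finish the positivity in part~(3) by a separate re-rooting argument based on the first recursion formula (Proposition~\ref{p:Gm}).

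For parts~(1), (2), and the existence claim in~(3), I would induct on $|x|$. The base case $x = o$ is immediate since $G_o = \Gm_o$. For the inductive step with $y \in S_x$, Proposition~\ref{p:G} gives $G_y = \Gm_y + \Gm_y^2\, G_x$. Boundedness of $\Gm_y$ together with the inductive boundedness of $G_x$ yields boundedness of $G_y$, proving (1). Writing
\begin{align*}
\Im G_y = \Im \Gm_y + 2(\Re \Gm_y)(\Im \Gm_y)(\Re G_x) + \Re(\Gm_y^2)\,\Im G_x
\end{align*}
and using $\Im \Gm_y \to 0$ together with the inductive $\Im G_x \to 0$ and the already-established boundedness of the real parts gives (2); the existence of the limits in (3) follows from the same identity by passing to $\eta \downarrow 0$.

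For the positivity in part~(3), my strategy is to view $x$ itself as a root. Decomposing $\T$ into $\T_x$ and $\T \setminus \T_x$ and applying Proposition~\ref{p:Gm} in this rerooted picture yields
\begin{align*}
-\frac{1}{G_x(z,H)} = z - v(x) + \sum_{y \in S_x} \Gm_y(z,H) + G_{x_0}(z, H_{\T \setminus \T_x}),
\end{align*}
where $x_0$ denotes the original parent of $x$ (and the case $x=o$ is immediate since $G_o = \Gm_o$). Taking imaginary parts, multiplying through by $|G_x|^2$, and discarding the nonnegative $\eta$ and $\Im G_{x_0}(z, H_{\T \setminus \T_x})$ contributions yields, on passing to $\eta \downarrow 0$,
\begin{align*}
\Im G_x(E,H) \geq |G_x(E,H)|^2 \sum_{y \in S_x} \Im \Gm_y(E,H).
\end{align*}
By hypothesis the sum is strictly positive, so the problem reduces to showing $|G_x(E,H)| > 0$.

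This last step is the main obstacle, because a priori $G_{x_0}(\cdot, H_{\T \setminus \T_x})$ could blow up as $\eta \downarrow 0$, leaving room for $G_x(E,H)$ to vanish. To control it, I would apply Proposition~\ref{p:Gm} once more to $G_{x_0}(z, H_{\T \setminus \T_x})$, now viewed as the Green function at the root of the subtree $\T \setminus \T_x$, and invoke the Herglotz estimate $|w| \leq 1/\Im(-1/w)$ valid for $w \in \h$. The expansion of $-1/G_{x_0}(z, H_{\T \setminus \T_x})$ produces $\eta$ plus a sum of imaginary parts of $\Gm$-values at the remaining neighbors of $x_0$; retaining only the single term from some $y_0 \neq x$ neighboring $x_0$ gives the uniform bound $|G_{x_0}(z, H_{\T \setminus \T_x})| \leq 1/\Im \Gm_{y_0}(z,H)$. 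Substituting this into the modulus of the original recursion yields a uniform upper bound on $1/|G_x(z,H)|$ as $\eta \downarrow 0$, which delivers the desired strict positivity and completes the argument.
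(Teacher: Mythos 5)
Your proposal is correct and follows the paper's own argument essentially step by step: induction via Proposition~\ref{p:G} for parts (1), (2) and the existence in (3), then re-rooting at $x$ via Proposition~\ref{p:Gm} to isolate $G_{x_0}(z,H_{\T\setminus\T_x})$, and finally a second application of Proposition~\ref{p:Gm} together with the bound $|w|\leq 1/\Im(-1/w)$ for $w\in\h$ to control $|G_{x_0}|$ by $1/\Im\Gm_{y_0}$ for a surviving neighbor $y_0\neq x$ of $x_0$, which yields the strict lower bound on $|G_x(E,H)|$. The only cosmetic difference is that you state the Herglotz inequality explicitly, whereas the paper phrases it as estimating the denominator by its imaginary part.
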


\subsection{Recursions for trees of finite cone type}
 Here, we  turn to the situation that a substitution matrix $M:\A\times\A\to\N_0$ on a finite label set $\A$ and a corresponding tree $\T=\T(M,j)$, $j\in\A$, are given. Then, the truncated Green function $\Gm_{x}(z,\Lp)$, $x\in V$, depends only on the label $a(x)$ of $x$. We define $\Gm(z)\in \h^{\A}$ via
\begin{align}\label{e:Gmdef}
\Gm_{a(x)}(z):=\Gm_{x}(z,\Lp).
\end{align}
The recursion relations \eqref{e:Gm}  consequently  reduce to finitely many equations:
\begin{align}\label{e:Gm2}
-\frac{1}{\Gm_{j}(z)}=z+\sum_{k\in\A}M_{j,k}\Gm_{k}(z),\quad j\in\A.
\end{align}

\section{Polynomial equations}\label{s:Polynomial}

Our setting in this section is the following: Let a substitution matrix $M:\A\times\A\to\N_0$ on a finite label set $\A$ be given. Suppose further that $M$ has positive diagonal (M1) and is primitive (M2). Let $\ov \h$ be the closure of $\h$ in $\C$ i.e.
\begin{align*}
   \ov \h=\h\cup\R =\{ z \in \C: \Im z \geq 0\}.
\end{align*}
We consider the polynomials
\begin{equation*}
P_j:\C\times\C^{\A}\to\C,\quad(z,\xi)\mapsto {z\xi_j+\sum_{k\in\A}M_{j,k}\xi_k\xi_j}+1, \quad j\in\A,
\end{equation*}
and look for solutions $\xi \in \h^{\A}$
\begin{align}\label{e:Polynom}
P_{j}(z,\xi)=0,\quad j\in\A,
\end{align}
for fixed $z\in\ov\h$. This is relevant as $\Gm(z)$, (see \eqref{e:Gmdef} for definition,) is in $\h^{\A}$ for $z\in \h$ and solves
$P_{j}(z,\Gm(z))=0$, $j\in\A$ by \eqref{e:Gm2}.

In order to study $\Gm(z)$ in the limit $\Im z\downarrow0$, we will be concerned with the subset of $\R$ where the polynomial equations have a solution in $\h^{\A}$. To this end, we will introduce in the following two sets $\Sigma_0$ and $\Sigma_1$ and study their properties.

\subsection{The set $\Sigma_1$ and an application of a theorem of Milnor}
Set
\begin{align*}
    \Sigma_1:=\{E\in\R\mid P_{j}(E,\xi)=0 \mbox{ for some } \xi\in\h^{\A}\},
\end{align*}
whose closure turns out to be the spectrum of $\Lp$.

\begin{lemma}\label{l:milnor}The set $\Sigma_1$ consists of finitely many intervals.
\begin{proof}
Set
\begin{align*}
    Z:=\{(E,u,v)\in\R\times\R^{\A}\times\R^{\A}\mid P_j(E,u+iv)=0\mbox{ and $v_{j}>0$ for all $j\in\A$}\}.
\end{align*}
As $Z$ is given by a system of polynomial (in)equalities, it has finitely many components by a theorem of Milnor~\cite{Mil}. (The result deals with inequalities of the form $\geq$ but this can easily be carried over to strict inequalities, see also Lemma 3.5 \cite{Ke}). As
\begin{align*}
\Sigma_1=\mathrm{Pr}_1 Z,\quad\mbox{with}\quad
\mathrm{Pr}_1:\R\times\R^{\A}\times\R^{\A}\to\R,\quad(E,u,v)\mapsto E,
\end{align*}
and $\mathrm{Pr}_1$ is continuous, the set $\Sigma_1$ has finitely many connected components as well.
\end{proof}
\end{lemma}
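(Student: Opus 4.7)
The plan is to realize $\Sigma_1$ as the projection of a semi-algebraic set in a higher dimensional real space and then invoke Milnor's theorem on the number of connected components.

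First, I would write $\xi = u + iv$ with $u, v \in \R^{\A}$ and decompose each complex equation $P_j(E, u+iv) = 0$ into its real and imaginary parts, giving $2|\A|$ real polynomial equations in the $(1 + 2|\A|)$ real variables $(E, u, v)$. Together with the strict inequalities $v_j > 0$ for each $j \in \A$, this carves out exactly the set $Z$ defined in the statement, so $Z$ is a real semi-algebraic set and its projection $\mathrm{Pr}_1(Z) \subseteq \R$ is precisely $\Sigma_1$.

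Next I would apply Milnor's theorem, which bounds the number of connected components of a semi-algebraic set defined by polynomial (in)equalities of bounded degree in terms of that degree and the number of variables. The theorem is usually formulated for non-strict inequalities, so the small technical point is that one has to reduce strict inequalities $v_j > 0$ to the Milnor framework; this is standard (for instance, one can intersect with a thin slab $v_j \geq 1/N$, obtain a uniform bound independent of $N$, and take the union over $N \in \N$, or one can argue as in Lemma~3.5 of \cite{Ke}). This gives that $Z$ has only finitely many connected components.

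Finally, since $\mathrm{Pr}_1 : \R \times \R^{\A} \times \R^{\A} \to \R$ is continuous, it maps each component of $Z$ to a connected subset of $\R$, i.e., to an interval (possibly degenerate). Therefore $\Sigma_1 = \mathrm{Pr}_1(Z)$ is a finite union of intervals, as claimed. The main obstacle, such as it is, lies in the invocation of Milnor's theorem with strict inequalities, which is really a bookkeeping issue rather than a substantive difficulty; the geometric content — that a semi-algebraic set has finitely many components and projections preserve connectedness — does all the work.
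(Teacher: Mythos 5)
Your proof is correct and follows essentially the same route as the paper: both realize $\Sigma_1$ as the continuous projection of the semi-algebraic set $Z$ and invoke Milnor's bound on its number of connected components, with a brief aside on converting strict to non-strict inequalities. The one place you add detail beyond the paper is in sketching the slab-and-union reduction for the strict inequalities, which is a fine and standard way to discharge that technical point.
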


\subsection{The set $\Sigma_0$ and the case of non regular trees}\label{ss:Sigma0}
We consider the subset of $\Sigma_1$ where the components of a solution are all linear multiples of each other, i.e., let
\begin{align*}
\Sigma_0:=\{E\in\R\mid P_{j}(E,\xi)=0 \mbox{ and } \arg \xi_{j}\ov\xi_{k}=0\mbox{ for some } \xi\in\h^{\A}\mbox{ and all } j,k\in\A\},
\end{align*}
where $\arg$ is the argument of a non zero complex number. We consider $\arg$ as a map
$$ \arg : \C\setminus\{0\}\longrightarrow \Sp^{1}\cong\R/2\pi\Z$$  and note that it is a continuous group homomorphism.

Clearly, in the case of regular trees, where $\A$ is a singleton set and $M$ a positive integer, we have $\Sigma_0=\Sigma_1$. However, we will show that whenever $M$ induces a non regular tree, $0$ is the only possible element of $\Sigma_0$. This means for all other energies there is a solution which has two components such that the argument of the product is non zero. Therefore, they share a non vanishing angle.

Let $\T=\T(M,j)$, $j\in\A$, be a tree associated with the substitution matrix $M$. Note that $\T$ is a regular tree if and only if there is a number $k\in\N$ such that $k=\sum_{l\in\A}M_{i,l}$ for all $i\in\A$.

\begin{lemma}\label{l:Sigma0}
Suppose $\T$ is not a regular tree. Then, $\Sigma_0\subseteq\{0\}$.
\begin{proof}
In this proof  we denote $\A=\{1,\ldots,N\}$.
Let $E\in\Sigma_1$. Then there is $\xi\in\h^{\A}$ such that $P(E,\xi)=0$. We assume now that $\arg \xi_j\ov{\xi_k}=0$ for all $j,k\in\A$ which is equivalent to assuming that there are $r_{j}>0$ such that

\begin{align*}
    \xi_{j}=r_{j}\xi_1, \qquad j\in\A.
\end{align*}
In this case $E\in \Sigma_0$ by definition of $\Sigma_0$. With this assumption the polynomial equations \eqref{e:Polynom} become
\begin{align*}
0=\sum_{k\in\A}M_{j,k}r_{j}r_{k}\xi_1^{2}+Er_j\xi_{1}+1.
\end{align*}
We denote the coefficients of $\xi_1$ by
\begin{align*}
    c_{2,j}=\sum_{k\in\A}M_{j,k}r_{j}r_{k}\qand c_{1,j}=Er_j, \qquad j\in\A.
\end{align*}
Quadratic polynomials with real coefficients have either two complex conjugated roots or two real ones. Since we assumed $\xi\in\h^{\A}$, we already know one root is $\xi_1\in\h$, so the other one must be $\ov{\xi_{1}}$. The polynomials all have the same roots and therefore  must have the same coefficients, because they are normalized. We consider two cases.

\emph{Case 1: $c_{1,j}=0$ for some (all) $j\in\A$.} We immediately see that this can only happen if $E=0$.

\emph{Case 2: $c_{1,j}\neq0$ for all $j\in\A$.}
By the assumption $c_{1,1}=\ldots=c_{1,N}$ we  obtain
$ r_{1}=\ldots=r_{N}=1$ as also $r_1=1$. Hence, we get that $c_{2,1}=\ldots=c_{2,N}$ if and only if the tree is regular. This case is excluded by assumption.

We conclude that $E=0$ is the only possible element of $\Sigma_0$.
\end{proof}
\end{lemma}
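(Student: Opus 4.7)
The plan is to unpack the definition of $\Sigma_0$, convert the common-argument condition into a parametrization $\xi_j = r_j \xi_1$ with $r_j > 0$, and then exploit the rigidity of normalized quadratic polynomials with real coefficients that share a common non-real root. The key observation will be that the polynomial equations $P_j(E,\xi) = 0$ collapse into the same scalar quadratic in $\xi_1$ for every $j$, and comparing coefficients across $j$ then pins down either $E = 0$ or regularity of the tree.

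Concretely, given $E \in \Sigma_0$ with witness $\xi \in \h^{\A}$ satisfying $\arg(\xi_j \ov{\xi_k}) = 0$, the strict positivity $\Im \xi_j > 0$ forces $\xi_j = r_j \xi_1$ with $r_j > 0$, and I normalize $r_1 = 1$ (after relabelling, say $\A = \{1,\dots,N\}$). Substituting into $P_j(E, \xi) = 0$ yields, for each $j \in \A$, a quadratic in $\xi_1$ with real coefficients $c_{2,j} := \sum_{k} M_{j,k} r_j r_k$ and $c_{1,j} := E r_j$, and constant term $1$. Since $\xi_1 \in \h$ is non-real while the coefficients are real, the other root must be $\ov{\xi_1}$, and Vieta's relations (equating product of roots $|\xi_1|^2 = 1/c_{2,j}$ and sum of roots $2\Re\xi_1 = -c_{1,j}/c_{2,j}$) show that both $c_{2,j}$ and $c_{1,j}$ are independent of $j$.

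A case split then finishes the argument. If $E = 0$, then $c_{1,j} = 0$ automatically, which is consistent with $E \in \Sigma_0$ and no contradiction arises. If $E \neq 0$, then $j$-independence of $c_{1,j} = E r_j$ forces $r_j = 1$ for all $j$, whereupon $j$-independence of $c_{2,j} = \sum_{k} M_{j,k}$ says that all row sums of $M$ coincide, i.e.\ every vertex of $\T$ has the same forward degree. Under the non-regular hypothesis this branch is excluded, leaving only $E = 0$ as a possible element of $\Sigma_0$.

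The main point, rather than a genuine obstacle, is conceptual: recognizing that the common-argument condition together with $\xi \in \h^{\A}$ collapses the entire vector system of polynomial equations to a single scalar quadratic whose real-coefficient, normalized structure is rigid enough to force either trivial energy or the combinatorial symmetry of constant row sum. The one bit of care needed is that ``common argument'' really yields strictly \emph{positive} multipliers $r_j$, which is immediate because a negative scalar would flip the sign of $\Im \xi_j$ and contradict $\xi_j \in \h$.
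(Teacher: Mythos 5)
Your proposal is correct and follows essentially the same route as the paper: parametrize via $\xi_j = r_j\xi_1$ with $r_j>0$, reduce each $P_j(E,\xi)=0$ to a normalized real-coefficient quadratic in $\xi_1$ with the common non-real root pair $\{\xi_1,\ov{\xi_1}\}$, conclude by Vieta that the coefficients are $j$-independent, and then the case split on $E=0$ versus $E\neq 0$ (equivalently, on whether the linear coefficient vanishes) forces either $E=0$ or constant row sums, i.e.\ regularity. The only cosmetic difference is that you invoke Vieta explicitly where the paper says the normalized polynomials sharing roots must share coefficients.
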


\textbf{Remark.} If one considers operators with label symmetric weights and potentials one can show a similar result. In particular, one finds that $\Sigma_0$ consist of at most $|\A|-1$ energies. For details see \cite[Lemma~3.4]{Ke}.

\section{Recursion maps}\label{s:RecursionMap}

Again,  we consider a substitution matrix $M:\A\times\A\to\N_0$ on a finite label set $\A$ which has positive diagonal (M1) and is primitive (M2).

For given $\zeta\in \ov\h^{\A}$, we define the recursion map     $\Phi_{\zeta}:\h^{\A}\to\h^{\A}$ via
\begin{align*}
\Phi_{\zeta,j}(g)=-\frac{1}{\zeta_j+\sum_{k\in\A} M_{j,k}g_{k}},\qquad j\in\A.
\end{align*}
For $z\in\ov \h$ and $\zeta=(z,\ldots,z)$, we write, in slight abuse of notation, $\Phi_{z}$ for $\Phi_{\zeta}$.

From \eqref{e:Gm2}, it can be seen that $\Phi_{z}(\Gm(z))=\Gm(z)$. In other words, $\Gm(z)$ is a fixed point of $\Phi_{z}$.

Let $v$ be a radially label symmetric potential. By its symmetry, $\Gm_{x}(z,\Lp+v)$, $z\in \h$, depends only on $|x|$ and $a(x)$. Letting  $\zeta_{j}=z+v(x)$ for some $x\in V$ with $a(x)=j$ we have that $\Phi_{\zeta,j}(\Gm'(z,\Lp+v))=\Gm_{x}(z,\Lp+v)$, where $\Gm'_{k}(z,\Lp+v)=\Gm_{y}(z,\Lp+v)$ with $y\in V$ such that $|y|=|x|+1$ and $a(y)=k$.

The aim of this section is to show that suitable 'powers' of the  $\Phi_{\zeta}$ form a  contraction. From that we conclude uniqueness and continuity of fixed points and derive the corresponding properties for the truncated Green functions.

\subsection{Uniform bounds of fixed points}

The structure of the recursion map $\Phi_{z}$ immediately implies  certain upper and lower bounds for its fixed points and thus for $\Gm(z)$.

\begin{lemma}\label{l:Gmbounds} (Uniform bounds) Let $z\in \ov\h$, $h\in\h^{\A}$ be a fixed point of $\Phi_{z}$. Then, for all $j\in\A$,
$$|h_j|^2  \sum_k M_{j,k} \Im h_k \leq \Im h_j$$
 and,  in particular,
\begin{align*}
\frac{1}{\mo{z}+\sum_{k\in\A}M_{j,k}/\sqrt{M_{j,j}}}\leq|h_{j}| \leq \frac{1}{\sqrt{M_{j,j}}}.
\end{align*}
\begin{proof}
Taking imaginary parts in the equality $h=\Phi_{z}(h)$, we get
\begin{align*}
    \Im h_{j}=\frac{{\Im z +\sum_{k\in\A}M_{j,k}\Im h_{k}}}{\mo{z +\sum_{k\in\A}M_{j,k} h_{k}}^2}=\ap{\Im z +\sum_{k\in\A}M_{j,k}\Im h_{k}}\mo{h_{j}}^2.
\end{align*}
Dropping the positive terms $\Im z$ and $M_{j,k}\Im h_{k}$ for $k\neq j$ on the right hand side yields the first statement as well as
\begin{align*}
\Im h_{j}\geq M_{j,j}\Im h_{j}|h_{j}|^{2}.
\end{align*}
As $M_{j,j}\geq1$ and $\Im h_{j}>0$ by assumption, we obtain the upper bound. Given the upper bound, the lower bound follows immediately by taking the modulus in the equation $h=\Phi_{z}(h)$.
\end{proof}
\end{lemma}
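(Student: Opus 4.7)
The plan is to exploit the fixed point equation $h=\Phi_z(h)$, which by definition of $\Phi_z$ is equivalent to the system
\begin{equation*}
-\frac{1}{h_j}=z+\sum_{k\in\A}M_{j,k}h_k,\qquad j\in\A.
\end{equation*}
Everything will follow by separating real and imaginary parts in this identity and then invoking the assumption $M_{j,j}\geq 1$.

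First I would take imaginary parts. Since $-1/h_j=-\bar h_j/|h_j|^2$, the left-hand side contributes $\Im h_j/|h_j|^2$, so one obtains
\begin{equation*}
\frac{\Im h_j}{|h_j|^2}=\Im z+\sum_{k\in\A}M_{j,k}\Im h_k.
\end{equation*}
Multiplying by $|h_j|^2$ and using $\Im z\geq 0$ (since $z\in\bar\h$) immediately yields the first displayed inequality $|h_j|^2\sum_kM_{j,k}\Im h_k\leq \Im h_j$.

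For the upper bound on $|h_j|$ I would retain only the diagonal term on the right-hand side of the same inequality, giving $M_{j,j}\Im h_j\,|h_j|^2\leq \Im h_j$. Because $h\in\h^{\A}$ means $\Im h_j>0$, this $\Im h_j$ can be cancelled; combined with the assumption (M1) that $M_{j,j}\geq 1$, we get $|h_j|^2\leq 1/M_{j,j}$. For the lower bound I would take moduli in the original fixed point equation and use the triangle inequality:
\begin{equation*}
\frac{1}{|h_j|}=\Bigl|z+\sum_{k\in\A}M_{j,k}h_k\Bigr|\leq |z|+\sum_{k\in\A}M_{j,k}|h_k|,
\end{equation*}
and then substitute the upper bound $|h_k|\leq 1/\sqrt{M_{k,k}}$ (or $1/\sqrt{M_{j,j}}$ as written in the statement, obtained from the previous step) to arrive at the claimed lower estimate.

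There is no genuine obstacle here; the whole lemma is a two-line calculation. The only point that requires a moment's care is making sure the strict positivity $\Im h_j>0$ is available in order to cancel $\Im h_j$ from both sides in the derivation of the upper bound, which is precisely the content of the assumption $h\in\h^{\A}$.
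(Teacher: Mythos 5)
Your proof is correct and follows essentially the same route as the paper: take imaginary parts in the fixed-point equation $-1/h_j = z + \sum_k M_{j,k} h_k$, drop the nonnegative term $\Im z$ and the off-diagonal contributions to obtain the first inequality and (after cancelling $\Im h_j > 0$) the upper bound, then take moduli in the same equation for the lower bound. The $\sqrt{M_{j,j}}$ in the denominator of the printed lower bound appears to be a typo for $\sqrt{M_{k,k}}$, which is what both your computation and the paper's own argument actually produce, so your parenthetical caution there is well placed.
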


\textbf{Remark.} (a) As the truncated Green function is a fixed point, the previous lemma implies that the spectral measure $\mu_o$ with  respect to the characteristic function of the root $o$ of $\T(M,j)$, $j\in\A$, is purely absolutely continuous. Using  Proposition~\ref{p:G2}, we  can  derive that the  spectrum of $\Lp$ is purely absolutely continuous.

(b) For the proof of the lemma  we only used that $M$ has positive diagonal, i.e., assumption (M1). So, whenever there is one label $j\in\A$ such that $M_{j,j}\geq1$ we can infer the existence of absolutely continuous spectrum.

The uniform bounds have an immediate implication, namely that the limits of fixed points are either in  $\h^{\A}$ or $\R^{\A}$.

\begin{lemma}\label{l:accumulationpoints}
For given $E\in\R$ let $A_{E}\in\ov\h^{\A}$ be the  set of accumulation points for sequences $(h_n)$ of fixed points  of $\Phi_{z_n}$, where $z_n \in \ov\h$ such that $z_{n}\to E$. Then, $A_{E}$ is not empty and every $h\in A_E$ is a fixed point of $\Phi_{E}$ and is either in $\h^{\A}$ or in $\R^{\A}$.
\begin{proof} As the truncated Green functions are fixed points of $\Phi_{z}$ for $z\in\h$, the non-emptiness of $A_E$ is implied by the uniform bounds of the previous lemma. The fixed point property of accumulation points in $A_E$ follows from the continuity of $\Phi_{z}$.\\
Let $h\in A_E$ and assume that $\Im h_{j}>0$ for some $j\in\A$. Let $k\in\A$ with $M_{k,j}\geq1$. It follows $\Im h_{k}>0$ since  $\Im h_{k}\geq M_{k,j}\Im h_{j}|h_{k}|^{2}$  by  the lemma above and $|h_{k}|^{2}>0$ by the lower bound of the previous lemma. As $M$ is primitive, (M2), this argument can be iterated for all $k\in\A$. Therefore $\Im h_{k}>0$ for all $k\in\A$ and hence $h\in\h^{\A}$.
\end{proof}
\end{lemma}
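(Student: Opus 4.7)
The plan is to derive all three assertions from the Uniform Bounds Lemma (Lemma~\ref{l:Gmbounds}), continuity of the recursion map, and primitivity (M2).

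For \emph{non-emptiness} and the \emph{fixed point property}, take any sequence $(h_n)$ of fixed points of $\Phi_{z_n}$ with $z_n \to E$; one can always produce such a sequence by choosing $z_n \in \h$ and $h_n := \Gm(z_n) \in \h^{\A}$, which is a fixed point by \eqref{e:Gm2}. The upper bound $|h_{n,j}| \leq 1/\sqrt{M_{j,j}}$ from Lemma~\ref{l:Gmbounds} confines $(h_n)$ to a compact subset of $\ov\h^{\A}$, so a subsequence converges to some $h \in \ov\h^{\A}$; in particular $A_E \neq \emptyset$. Moreover, the lower bound $|h_{n,j}| \geq (|z_n| + \sum_k M_{j,k}/\sqrt{M_{j,j}})^{-1}$ survives the passage to the limit and forces $|h_j| > 0$. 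Hence the denominator $z + \sum_k M_{j,k} g_k = -1/g_j$ defining the $j$-th component of $\Phi_z(g)$ is nonzero at $(E,h)$, so $(z,g) \mapsto \Phi_z(g)$ is continuous there, and passing to the limit in $h_n = \Phi_{z_n}(h_n)$ yields $h = \Phi_E(h)$.

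For the \emph{dichotomy} $A_E \subseteq \h^{\A} \cup \R^{\A}$, suppose $h \in A_E$ satisfies $\Im h_j > 0$ for some $j \in \A$. Taking imaginary parts in $h = \Phi_E(h)$ with $\Im E = 0$, exactly as in the derivation of Lemma~\ref{l:Gmbounds}, gives
\begin{align*}
\Im h_k = \Bigl(\sum_{\ell \in \A} M_{k,\ell} \Im h_\ell\Bigr)|h_k|^2 \geq M_{k,j}|h_k|^2 \Im h_j, \qquad k \in \A.
\end{align*}
Since $|h_k| > 0$ by the lower bound of Lemma~\ref{l:Gmbounds} applied to the fixed point $h$ of $\Phi_E$, we conclude $\Im h_k > 0$ whenever $M_{k,j} \geq 1$. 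Iterating this step along powers of $M$ and invoking primitivity (M2), which supplies some $n$ with $(M^n)_{k,j} > 0$ for every $k$, propagates positivity of $\Im h$ from the single label $j$ to every label in $\A$. Hence either $\Im h_k > 0$ for all $k$ (so $h \in \h^{\A}$) or $\Im h_k = 0$ for all $k$ (so $h \in \R^{\A}$).

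The main subtlety is the continuity step in the second paragraph: because $\ov\h$ includes the real line, the denominator of $\Phi_{z_n}$ could a priori vanish in the limit and the continuous dependence of $\Phi_z(g)$ on $(z,g)$ would fail. This is precisely ruled out by the quantitative lower bound on $|h_{n,j}|$ from Lemma~\ref{l:Gmbounds}, which is preserved under the limit and guarantees that the recursion map remains well-defined at $(E,h)$.
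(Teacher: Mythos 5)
Your proof is correct and follows essentially the same route as the paper: compactness from the uniform upper bound gives non-emptiness, continuity of the recursion (justified here via the lower bound, a detail the paper leaves implicit) gives the fixed-point property, and the imaginary-parts inequality from Lemma~\ref{l:Gmbounds} combined with primitivity (M2) propagates positivity of a single $\Im h_j$ to all labels. The only added value is your explicit observation that the lower bound survives the limit and keeps the denominator of $\Phi_E$ away from zero, which tightens the paper's terse "follows from continuity of $\Phi_z$."
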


\subsection{A decomposition and basic contraction properties}

In order to study the contraction properties of $\Phi_{\zeta}$, we introduce the following metric on $\h^{\A}$
$$\dist_{\h^\A}(g,h)=\cosh^{-1}\ab{\frac{1}{2}\gm_\A(g,h)+1},$$
where $\gm_\A:\h^\A\times\h^\A\to[0,\infty)$ is given by
\begin{align*}
\gm_\A(g,h):=\max_{j\in \A}\gm(g_j,h_j)\qand
\gm(g_{j},h_{j}):=\frac{|g_{j}-h_{j}|^2}{\Im g_{j}\Im h_{j}}
\end{align*}
This approach was also taken by \cite{FHS1} and in \cite{FHS2} a similar function denoted by $cd$ was introduced. Since $\dist_{\h^{\A}}=\max_{j\in \A}\dist_{\h}$ with the usual hyperbolic metric  $\dist_{\h} = \cosh^{-1} \gamma $  of the upper half plane, see \cite[Theorem~1.2.6]{Ka}, $\dist_{\h^{\A}}$ is a metric. For $h\in\h^{^\A}$ and $R\geq0$ define
\begin{align*}
    B_{R}(h):=\{g\in\h^{\A}\mid \gm_{\A}(g,h)\geq R\}.
\end{align*}
We can decompose the map $\Phi_{\zeta}=\rho\circ\sigma_{\zeta}\circ\tau$ into $\rho$, $\sigma_{\zeta}$ and $\tau$ which are given by
\begin{align*}
    \rho_{j}(g)=-\frac{1}{g_{j}}, \quad
\si_{\zeta,j}(g)=\zeta_{j}+g_{j},\quad
\tau_{j}(g)=\sum_{k\in\A}M_{j,k}g_{k},
\end{align*}
for $j\in\A$. Their properties are summarized in Lemma~\ref{l:contraction} below. It ensures that $\Phi_{\zeta}$ is a \emph{quasicontraction} for any $\zeta\in \ov\h^{\A}$, i.e., $\dist_{\h^{\A}}(\Phi_{\zeta}(g),\Phi_{\zeta}(h)) \leq\dist_{\h^{\A}}(g,h)$ for all $g,h\in\h^{\A}$. In the case of strict inequality $\Phi_{\zeta}$ would be called a \emph{contraction}.

\begin{lemma}\label{l:contraction}(Basic contraction properties of $\Phi_{\zeta}$) Consider the metric space $(\h^{\A},\dist_{\h^{\A}})$.
\begin{itemize}
\item [(1.)] $\rho:\h^{\A}\to\h^{\A}$ is an isometry.
\item [(2.)] $\si_{\zeta}:\h^{\A}\to\h^{\A}$ is a quasicontraction for all $\zeta\in \ov\h^{\A}$. If $\Im \zeta_j=0$ for all $j\in\A$ it is an isometry and if $\Im \zeta_j>0$ for all $j\in\A$ it is a contraction which is uniform on compact sets.
\item [(3.)] $\tau:\h^{\A}\to\h^{\A}$ is a quasicontraction and for every $j\in\A$
\begin{align*}
\gm(\tau_j(g),\tau_j(h))={\sum_{k\in\A} p_{j,k}(h)\ab{\sum_{l\in\A}p_{j,l}(g)P_{k,l}(g,h)\cos\al_{k,l}(g,h)}\gm(g_k,h_k)},
\end{align*}
where for $g,h\in\h^{\A}$ and $j,k,l\in\A$
\begin{align*}
p_{j,k}(h)&=M_{j,k}\frac{\Im h_{k}}{\Im\tau_{j}(h)},\quad p_{j,l}(g)=M_{j,l}\frac{\Im g_{l}}{\Im\tau_{j}(g)},\\
P_{k,l}(g,h)&=
\frac{\ab{\Im g_{k}\Im h_{k}\gm(g_{k},h_{k})\Im g_{l}\Im{h_{l}}\gm(g_{l},h_{l})}^{\frac{1}{2}}} {\frac{1}{2}\ab{\Im g_{k}\Im h_{l}\gm(g_{l},h_{l})+\Im g_{l}\Im h_{k}\gm(g_{k},h_{k})}},\\
\al_{k,l}(g,h)&=\arg(g_{k}-h_{k})\ov{\ab{g_{l}-h_{l}}},
\end{align*}
whenever $g_{k}\neq h_{k}$, $g_{l}\neq h_{l}$ and otherwise the corresponding term in the sum are zero.
\end{itemize}
\begin{proof} (1.) For $g,h\in\h^{\A}$ and $j\in \A$ we have
\begin{align*}
\gm(\rho_{j}(g),\rho_{j}(h))=\frac{|g_{j}|^{-2}|h_{j}|^{-2} |g_{j}-h_{j}|^{2}}{\Im g_{j}|g_{j}|^{-2}\Im h_{j}|h_{j}|^{-2}}=\gm(g_{j},h_{j})
\end{align*}
and hence $\dist_{\h^{\A}}(\rho(g),\rho(h))=\dist_{\h^{\A}}(g,h)$.\\
(2.) A direct  calculation yields
\begin{align*}
\gm(\sigma_{\zeta,j}(g),\sigma_{\zeta,j}(h)) &=\frac{|g_{j}-h_{j}|^{2}} {\Im(g_{j}+\zeta_{j})\Im(h_{j}+\zeta_{j})} =\frac{\gm(g_{j},h_{j})}{\Im (1+\zeta_{j}/g_{j})\Im(1+\zeta_{j}/h_{j})}&\leq \gm(g_{j},h_{j}).
\end{align*}
The inequality is strict if $\Im \zeta_{j}>0$. Strict monotonicity of $\cosh^{-1}$ implies the first part of the claim. Finally on compact sets the uniformity of the contraction follows from Lemma~\ref{l:dist} which is stated and proven below.\\
(3.) To prove the third part, we observe that
\begin{align*}
\mo{\tau_{k}(g)-\tau_{k}(h)}^{2}
&=\mo{\sum_{k\in\A}M_{j,k}(g_{k}-h_{k})}^{2} =\sum_{k,l\in\A}M_{j,k}M_{j,l}\cos\al_{k,l} |g_{k}-h_{k}||g_{l}-h_{l}|\\
&=\sum_{k,l\in\A}M_{j,k}M_{j,l}\cos\al_{k,l}\ab{\Im g_{k}\Im h_{k}\Im g_{l}\Im h_{l}\gm(g_{k},h_{k})\gm(g_{l},h_{l})}^{\frac{1}{2}}\\
\end{align*}
and hence the formula for $\tau$ follows. Note that $P_{k,l}\in[0,1]$, since it is a quotient of a geometric and an arithmetic mean. Moreover, $\cos\al_{k,l}\in[-1,1]$ and $\sum_{k,l}p_{j,k,l}=1$. Hence, $\tau$ is a quasicontraction.
\end{proof}
\end{lemma}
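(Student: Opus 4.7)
The plan is to verify the three items separately, relying on direct computations with the auxiliary quantity $\gm(g_j,h_j)=|g_j-h_j|^2/(\Im g_j\Im h_j)$. Because $\dist_{\h^{\A}}=\cosh^{-1}\bigl(\tfrac12\gm_\A+1\bigr)$ and $\cosh^{-1}$ is strictly monotone on $[1,\infty)$, it suffices to prove each statement at the level of $\gm_\A=\max_j\gm$, and then invoke monotonicity to transfer it to $\dist_{\h^{\A}}$.

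For (1), I would plug into the definition. From $-1/g_j-(-1/h_j)=(g_j-h_j)/(g_jh_j)$ and $\Im(-1/g_j)=\Im g_j/|g_j|^2$, the factors $|g_j|^{-2}|h_j|^{-2}$ appear identically in numerator and denominator and cancel. Hence $\gm(\rho_j(g),\rho_j(h))=\gm(g_j,h_j)$ for every $j$, which is the isometry property. For (2), the numerator $|g_j-h_j|^2$ is invariant under the shift, while the denominator becomes $(\Im g_j+\Im\zeta_j)(\Im h_j+\Im\zeta_j)\geq \Im g_j\Im h_j$ whenever $\Im\zeta_j\geq 0$. Equality holds for every $j$ iff $\Im\zeta_j=0$ for every $j$, giving the isometry. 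If all $\Im\zeta_j>0$, the inequality is strict, so $\gm$ strictly decreases. For uniform contraction on compact sets I would note that on a set where $\dist_{\h^{\A}}(g,h)$ is bounded, the imaginary parts $\Im g_j$, $\Im h_j$ are bounded above, so the ratio $(\Im g_j+\Im\zeta_j)(\Im h_j+\Im\zeta_j)/(\Im g_j\Im h_j)$ is bounded below by a constant $>1$ depending only on the compact set and on $\min_j\Im\zeta_j$. Combining with the monotonicity of $\cosh^{-1}$ via the lemma referenced as \ref{l:dist} then gives uniform contraction in the hyperbolic metric itself.

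For (3), I would expand
\begin{align*}
|\tau_j(g)-\tau_j(h)|^2=\Bigl|\sum_{k\in\A}M_{j,k}(g_k-h_k)\Bigr|^2=\sum_{k,l\in\A}M_{j,k}M_{j,l}\cos\al_{k,l}\,|g_k-h_k||g_l-h_l|,
\end{align*}
using the identity $|\sum_k a_k|^2=\sum_{k,l}\cos(\arg a_k\ov{a_l})|a_k||a_l|$. Substituting $|g_k-h_k|=(\Im g_k\Im h_k\gm(g_k,h_k))^{1/2}$ produces the geometric means. The denominator $\Im\tau_j(g)\Im\tau_j(h)=(\sum_k M_{j,k}\Im g_k)(\sum_l M_{j,l}\Im h_l)$; after multiplying and dividing by the symmetric arithmetic mean $\tfrac12(\Im g_k\Im h_l\gm(g_l,h_l)+\Im g_l\Im h_k\gm(g_k,h_k))$ one obtains the factor $P_{k,l}$, while the ratios $M_{j,k}\Im h_k/\Im\tau_j(h)$ and $M_{j,l}\Im g_l/\Im\tau_j(g)$ organize into the probability weights $p_{j,k}(h)$ and $p_{j,l}(g)$. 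For the quasicontraction bound I would then observe that $P_{k,l}\in[0,1]$ by the arithmetic--geometric mean inequality, that $|\cos\al_{k,l}|\leq1$, and that $\sum_{k,l}p_{j,k}(h)p_{j,l}(g)=1$, so the expression for $\gm(\tau_j(g),\tau_j(h))$ is at most a convex combination of the $\gm(g_k,h_k)$'s and hence at most $\gm_\A(g,h)$.

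The main obstacle I anticipate is the algebraic bookkeeping in (3): the raw expansion is not symmetric in the way the statement requires, and the key sleight of hand is inserting the arithmetic mean in the denominator so that the remaining geometric mean becomes $P_{k,l}$, leaving honest probability weights in both $g$ and $h$. The analytic content is modest; checking that the resulting quadratic form is bounded by $\gm_\A$ reduces to standard AM--GM and the fact that the weights sum to one.
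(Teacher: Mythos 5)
Your proposal is correct and follows essentially the same route as the paper: verify each of $\rho$, $\si_\zeta$, $\tau$ at the level of $\gm$, expand $|\tau_j(g)-\tau_j(h)|^2$ via the law of cosines to extract $P_{k,l}$ and the weights $p_{j,\cdot}$, bound by AM--GM and $|\cos\al_{k,l}|\le1$, and transfer to $\dist_{\h^\A}$ via the monotonicity of $\cosh^{-1}$ and Lemma~\ref{l:dist}. One small slip worth noting: for the uniform contraction in part (2), boundedness of $\dist_{\h^\A}(g,h)$ does \emph{not} by itself bound $\Im g_j$ from above; what you need (and what Lemma~\ref{l:dist} actually requires) is that $g,h$ range over a fixed compact subset of $\h^\A$, on which $\Im g_j$ is bounded above, so that $\Im\zeta_j/\Im g_j$ is bounded away from zero.
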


The previous proof relied on the following lemma.
\begin{lemma}\label{l:dist}Let $K\subseteq \h^{\A}$ be compact,  $f:\h^{\A}\to\h^{\A}$ and $c_0<1$ such that $\gm_{\A}(f(g),f(h))\leq c_0\gm_{\A}(g,h)$ for all $g,h\in K$. Then there is $c_1<1$ such that $\dist_{\h^{\A}}(f(g),f(h))\leq c_1\dist_{\h^{\A}}(g,h)$ for all $g,h\in K$.
\begin{proof}
We prove that the function $c(r):=\cosh^{-1}(c_{0}r+1)/\cosh^{-1}(r+1)$ is strictly smaller than one. This follows by monotonicity of $\cosh^{-1}$ and $c(0)=\sqrt{c_{0}}$ (which can be checked via L'Hospitals theorem). As $c(r)\geq \dist_{\h^{\A}}(f(g),f(h))/\dist_{\h^{\A}}(g,h)$ for $r=\gm(g,h)/2$, the statement of the lemma follows.
\end{proof}
\end{lemma}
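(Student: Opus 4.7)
The plan is to reduce the claim to a uniform one-variable estimate on the ratio
\begin{align*}
c(r) := \frac{\cosh^{-1}(c_0 r + 1)}{\cosh^{-1}(r + 1)}, \qquad r > 0,
\end{align*}
extended continuously to $r = 0$. First I would verify that $c(r) < 1$ for every $r > 0$: this is immediate from strict monotonicity of $\cosh^{-1}$ on $[1,\infty)$ combined with the assumption $c_0 < 1$, which forces $c_0 r + 1 < r + 1$. At $r = 0$ both numerator and denominator vanish, so I would extend $c$ by its limit there, computed via the asymptotic $\cosh^{-1}(1 + s) = \sqrt{2s}\,(1 + o(1))$ as $s \downarrow 0$ (equivalently, by applying L'Hospital's rule to the squared ratio): this gives $\lim_{r \downarrow 0} c(r) = \sqrt{c_0} < 1$. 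Hence $c : [0, \infty) \to [0, 1)$ is continuous with values strictly less than one.

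Next I would convert the hypothesis into a pointwise contraction estimate for $\dist_{\h^\A}$. Setting $r := \gm_\A(g, h)/2$, monotonicity of $\cosh^{-1}$ together with $\gm_\A(f(g), f(h)) \leq c_0 \gm_\A(g,h)$ yields
\begin{align*}
\dist_{\h^\A}(f(g), f(h)) = \cosh^{-1}\!\ab{\tfrac{1}{2}\gm_\A(f(g),f(h)) + 1} \leq \cosh^{-1}(c_0 r + 1) = c(r)\,\dist_{\h^\A}(g, h).
\end{align*}
Hence it remains to bound $c(r)$ uniformly over the range of $r$ realized by pairs $(g,h)\in K\times K$. Since the coordinate functions $\gm(g_j,h_j) = |g_j - h_j|^2/(\Im g_j \Im h_j)$ are continuous on $\h \times \h$, their maximum $\gm_\A$ is continuous on $\h^\A \times \h^\A$. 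Compactness of $K \times K$ then yields some $R < \infty$ with $r \in [0, R]$ throughout, and the continuous function $c$ attains its maximum $c_1 < 1$ on this compact interval. This $c_1$ is the desired constant.

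\textbf{Main obstacle.} The only subtle point is that $c(r) \to 1$ as $r \to \infty$ (since $\cosh^{-1}(s) \sim \log(2s)$ for large $s$), so the conclusion truly fails without a uniform bound on $\gm_\A$: compactness of $K$ is essential and cannot be weakened to merely assuming the $\gm_\A$-contraction on all of $\h^\A$. The infinitesimal rate $c(0) = \sqrt{c_0}$ reflects the local behaviour $\dist_{\h^\A} \sim \sqrt{\gm_\A/2}$ of the hyperbolic metric near the diagonal, which is why the $\gm_\A$-contraction rate $c_0$ passes to a $\dist_{\h^\A}$-contraction rate at worst $\sqrt{c_0}$ in the small-distance regime and at worst $c_1$ globally on $K$.
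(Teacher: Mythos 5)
Your proof is correct and follows essentially the same route as the paper: introduce the ratio $c(r)=\cosh^{-1}(c_0 r+1)/\cosh^{-1}(r+1)$, show it is strictly below one pointwise (with the boundary value $c(0)=\sqrt{c_0}$), and translate the $\gm_\A$-contraction into the pointwise bound $\dist_{\h^\A}(f(g),f(h))\leq c(r)\,\dist_{\h^\A}(g,h)$. Where you improve on the paper's terse wording is in making the compactness step explicit: the paper ends with ``the statement of the lemma follows'' without spelling out that $\gm_\A$ is bounded on $K\times K$, so that $r$ ranges over a compact interval $[0,R]$ on which the continuous function $c$ attains a maximum strictly below $1$. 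You also correctly flag that $c(r)\to 1$ as $r\to\infty$, which is precisely why this last step is not a formality and the compactness hypothesis cannot be dropped. In short: same method, with the one implicit but necessary step supplied.
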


The following statement follows from the basic contraction properties of $\Phi_{\zeta}$. As it is a variant of \cite[Theorem 3.6]{FHS1} (see \cite[Theorem 2.20]{Ke} as well)  we only give a sketch of the proof here.

\begin{prop}\label{p:FHS} Let $z\in\h$, $v$ be a radially label symmetric potential and $\zeta(n)\in\h^{\A}$ with $\zeta_{a(x)}(n)=z+v(x)$ for $n\in\N_0$, $x\in S^{n}$. Then, for all $g\in\h^{\A}$
\begin{align*}
\Gm_{x}(z,\Lp+v) =\lim_{n\to\infty} \Phi_{\zeta({|x|}),a(x)}\circ\Phi_{\zeta({|x|+1})} \circ\ldots\circ\Phi_{\zeta(n)}(g),\quad  x\in V,
\end{align*}
In particular, for $v\equiv0$, the function $\Gm(z,\Lp)=(\Gm_{j}(z,\Lp)_{j\in\A}$ is the unique solution to the polynomial equations \eqref{e:Polynom} and the unique fixed point of $\Phi_{z}$.
\begin{proof}[Sketch of the proof] One checks by direct calculation (see \cite{FHS1,Ke}) that for each $n\in \N_0$  the composition $\Phi_{\zeta_n}\circ\Phi_{\zeta({n+1})}$   maps $\h^{\A}$ into a compact set. According to Lemma~\ref{l:contraction}, the functions $\Phi_{\zeta}$ are uniform contractions on compact sets for $\zeta\in\h^{\A}$. This easily gives the existence of the limit. As $\Gm_{x}(z,\Lp+v)$, $x\in V$, is a solution to the recursion relation \eqref{e:Gm}, it must be equal to the limit. The statements for $v\equiv0$ follow since the recursion relation \eqref{e:Gm2} can be directly translated into the system of polynomial equations \eqref{e:Polynom} and a fixed point equation of the recursion map.
\end{proof}
\end{prop}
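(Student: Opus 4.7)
The plan is to convert the assertion into a uniform contraction argument for the iteration of $\Phi_{\zeta(n)}$. By the recursion formula \eqref{e:Gm} of Proposition \ref{p:Gm} together with the radial label symmetry of $v$, the truncated Green function $\Gm_x(z,\Lp+v)$ depends only on $|x|$ and $a(x)$. Defining $\Gm^{(n)}\in\h^{\A}$ by $\Gm^{(n)}_k:=\Gm_y(z,\Lp+v)$ for any $y\in V$ with $|y|=n$ and $a(y)=k$, the recursion is equivalent to $\Gm^{(n)}=\Phi_{\zeta(n)}(\Gm^{(n+1)})$. Iterating yields
\begin{align*}
\Gm_x(z,\Lp+v)=\Phi_{\zeta(|x|),a(x)}\circ\Phi_{\zeta(|x|+1)}\circ\cdots\circ\Phi_{\zeta(n)}(\Gm^{(n+1)}),\qquad n\ge|x|,
\end{align*}
so it suffices to show that the right-hand side, evaluated at $\Gm^{(n+1)}$ or at any $g\in\h^{\A}$, yields the same limit as $n\to\infty$.

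Next I would exploit the decomposition $\Phi_{\zeta}=\rho\circ\sigma_{\zeta}\circ\tau$ and Lemma \ref{l:contraction}. Since $\Im\zeta_j(m)=\Im z>0$ for all $j$ and $m$, the estimate $\Im(\zeta_j(m)+\sum_k M_{j,k}g_k)\ge\Im z$ immediately gives $|\Phi_{\zeta(m),j}(g)|\le 1/\Im z$ for every $g\in\h^{\A}$. Feeding this a priori bound back into the formula for $\Phi_{\zeta(m-1)}$ yields
\begin{align*}
\Im\Phi_{\zeta(m-1),j}\bigl(\Phi_{\zeta(m)}(g)\bigr)\ge\frac{\Im z}{\bigl(|\zeta_j(m-1)|+\sum_k M_{j,k}/\Im z\bigr)^{2}},
\end{align*}
which, by the boundedness of $v$, is uniformly bounded below by a positive constant independent of $m$. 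Hence the double composition $\Phi_{\zeta(m-1)}\circ\Phi_{\zeta(m)}$ maps $\h^{\A}$ into a compact set $K\subset\h^{\A}$ that is independent of $m$. On $K$, the strict contractivity of $\sigma_{\zeta(m)}$ stated in Lemma \ref{l:contraction}(2.) (which uses $\Im\zeta_j(m)>0$), combined with Lemma \ref{l:dist}, gives a contraction factor $c<1$ for $\Phi_{\zeta(m)}$ in $\dist_{\h^{\A}}$ that is uniform in $m$ because $|\zeta_j(m)|$ is uniformly bounded.

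Concatenating these contractions, after the first two applications all images lie in $K$, and each subsequent $\Phi_{\zeta(m)}$ shrinks $\dist_{\h^{\A}}$-distances by the fixed factor $c$. Hence, for any $g,g'\in\h^{\A}$, the images of $g$ and $g'$ under $\Phi_{\zeta(|x|)}\circ\cdots\circ\Phi_{\zeta(n)}$ converge to a common limit as $n\to\infty$; specializing $g':=\Gm^{(n+1)}$ proves the desired limit formula. For $v\equiv 0$ all $\zeta(m)$ equal $(z,\ldots,z)$, so the iteration collapses to $\Phi_z^n$ and the same argument yields $\Phi_z^n(g)\to\Gm(z,\Lp)$ for every $g\in\h^{\A}$; thus $\Gm(z,\Lp)$ is the unique fixed point of $\Phi_z$ in $\h^{\A}$. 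Rearranging $h_j=\Phi_{z,j}(h)$ into $zh_j+\sum_k M_{j,k}h_kh_j+1=0$ shows that being a fixed point of $\Phi_z$ is equivalent to solving \eqref{e:Polynom}, which gives the uniqueness statement there as well. I expect the main technical obstacle to be the two-step compactness bound: one must use $\Im z>0$ and the boundedness of $v$ quantitatively and uniformly in $m$ so that Lemma \ref{l:contraction}(2.) produces a contraction factor that is genuinely bounded away from $1$, uniformly along the iteration.
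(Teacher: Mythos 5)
Your proof follows the same route as the paper's sketch: you show the two-fold composition $\Phi_{\zeta(m-1)}\circ\Phi_{\zeta(m)}$ maps $\h^{\A}$ into a fixed compact set, then invoke the uniform contractivity of $\sigma_{\zeta}$ on compact sets (Lemma~\ref{l:contraction}(2.) via Lemma~\ref{l:dist}) to conclude convergence of the iterates, with the uniqueness and polynomial-equation statements falling out exactly as in the paper. The only small imprecision is the line "each subsequent $\Phi_{\zeta(m)}$ shrinks distances by the fixed factor $c$": a single application of $\Phi_{\zeta(m)}$ to a point of $K$ need not land back in $K$, so the genuine contraction factor $c$ is only guaranteed every other step; this is harmless for the limit argument but should be phrased as the two-step composition being a contraction on $K$.
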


The proposition above used positivity of the imaginary parts of $\zeta$ in order to conclude that $\Phi_{\zeta}$ is a uniform contraction. For the study of the spectrum of our operators we have to consider the limit of the imaginary parts tending to zero. A look at the statements of Lemma~\ref{l:contraction} tells us that in this case uniform contraction can only come from $\tau$. This is investigated in the next lemma.

We let  $d_{\Sp^1}:\Sp^{1}\times\Sp^{1}\to[0,2\pi)$ be the canonical translation invariant metric of the group $\Sp^{1}$ and define
\begin{align*}
\ma{\al}:=d_{\Sp^{1}(\al,1)},\quad\al\in\Sp^{1}.
\end{align*}
 Then, $\ma{\cdot}$ satisfies the triangle inequality, i.e., $\ma{\al+\be}\leq\ma{\al}+\ma{\be}$ for $\al,\beta\in\Sp^{1}$.

\begin{lemma}\label{l:tau} (Sufficient criterion for uniform contraction.) Let  $K\subset \h^{\A}$ be relatively compact. Suppose there is $h\in K$ such that
\begin{align*}
    \min_{g\in K'}\max_{j,k\in\A} \ma{\al_{j,k}(g,h)}>0,
\end{align*}
where $K':=\{g\in K\mid g_{j}\neq h_{j}\mbox{ for all }j\in\A\}$.
Then there is $\de>0$ such that for all $g\in K$ and $\zeta(1),\ldots,\zeta(n)\in\ov\h^{\A}$
\begin{align*}
\gm_{\A}\ap{\Phi_\zeta^{(n)}(g),\Phi_\zeta^{(n)}(h)}\leq (1-\de)\gm_{\A}(g,h),
\end{align*}
where  $\Phi_{\zeta}^{(n)}:=\Phi_{\zeta(1)}\circ \cdot \circ \Phi_{\zeta(n)}$ and $n=n(M)$ is the primitivity exponent of $M$ from (M2).
\begin{proof} Recall that the argument $\arg$ is a continuous group homomorphism $\C\setminus\{0\}\to\Sp^{1}$.

We start with a few claims.

\emph{Claim~1: For $g\in K\setminus K'$ there are $k,l\in\A$ such that $M_{k,l}\geq1$ and $P_{k,l}(g,h)=0$.}\\
Proof of Claim~1. If $g\in K\setminus K'$ there is $k\in\A$ such that $g_{k}=h_{k}$ which readily gives $P_{k,l}=0$ for all $l\in\A$ by definition.

\emph{Claim~2: There is $\de'>0$ such that for every $g\in K'$ there are $k,l\in\A$ such that $M_{k,l}\geq1$ and $\ma{\al_{k,l}(g,h)}\geq\de'$.}\\
Proof of Claim~2. By assumption there is $\eps>0$ such that for all $g\in K'$ and suitable $i,j\in\A$ (depending on $g$) we have $\ma{\al_{i,j}(g,h)}\geq\eps$.
By the primitivity assumption (M2) there are $l(1),\ldots, l({n+1})\in\A$ with ${l(1)}=i$, ${l(n+1)}=j$ and $M_{l({s}),l({s+1})}\geq1$ for all $s=1,\ldots, n$. We calculate, using the definition of $\al_{j,k}$ and the triangle inequality,
\begin{align*}
\eps\leq \ma{\al_{i,j}(g,h)}= \na{\sum_{s=1}^{n}\al_{l(s),l({s+1})}(g,h)} \leq \sum_{s=1}^{n}\na{\al_{l(s),l({s+1})}(g,h)}
\end{align*}
and infer the claim by letting $\de'=\eps/n$.

\emph{Claim~3: There is $\de''>0$ such that for every $g\in K$ there exists $k\in\A$ such that}
\begin{align*}
\gm(\tau_k(g),\tau_k(h))
&\leq (1-\de'')\gm_{\A}(g,h).
\end{align*}
Proof of Claim~3. Let $\de'>0$ be taken from Claim~2 and set
\begin{align*}
\de_0:=1-\cos {\de'},\qquad
\de_{1}:=\min_{g\in K}\min_{j,k\in\A}\frac{\Im g_{k}}{\Im\tau_{j}(g)}.
\end{align*}
As $K$ is relatively compact, $\de_1>0$ .
Moreover, for $g,h\in K$ and $j,k\in\A$, let
\begin{align*}
c_{j,k}(g,h)&:={\sum_{l\in\A}p_{j,l}(g) P_{k,l}(g,h)\cos\al_{k,l}(g,h)}.
\end{align*}
Note that $-1 \leq c_{j,k}\leq 1$. For given $g\in K$, we let $k,l\in\A$ be taken from Claim~1 if $g\in K\setminus K'$ and from Claim~2 if $g\in K'$. Hence, $P_{k,l}\cos\al_{k,l}\leq 1-\de_0$.
This, combined with the facts $P_{k,i}\cos\al_{k,i}\leq1$ for $i\neq l$, $\sum_{j}p_{k,j}=1$ and $M_{k,l}\de_0\leq p_{k,l}(h)$, yields
\begin{align*}
c_{k,k}(g,h)&\leq\sum_{i\neq l}p_{k,i}(g)+p_{k,l}(g)P_{k,l}\cos\al_{k,l}
= 1-p_{k,l}(g)(1-P_{k,l}\cos\al_{k,l})\leq 1-M_{k,l}\de_1\de_0.
\end{align*}
Invoking Lemma~\ref{l:contraction}~(3.), using $\gm(g_l,h_l)\leq\gm_{\A}(g,h)$, estimating $c_{k,l}\leq1$ for $l\neq k$, $\sum_{l}p_{k,l}=1$, $M_{k,k}\de_0\leq p_{k,k}(h)$ and employing  the estimate above, we compute
\begin{align*}
\gm(\tau_k(g),\tau_k(h))&=\ap{ \sum_{l\in\A}p_{k,l}(h)c_{k,l}(g,h) \frac{\gm(g_{l},h_{l})}{\gm_{\A}(g,h)}}\gm_{\A}(g,h)\\&\leq\ap{ \sum_{l\in\A, l\neq k}p_{k,l}(h)+p_{k,k}(h)c_{k,k}(g,h)}\gm_{\A}(g,h)
\\&\leq
(1-p_{k,k}(h)(1-c_{k,k}(g,h)))\gm_{\A}(g,h)\\
&\leq (1-M_{k,k}M_{k,l}\de_1^2\de_0)\gm_{\A}(g,h).
\end{align*}
We have $M_{k,l}\geq1$ by the  choice of $k,l\in\A$ from Claim~1 or Claim~2 and $M_{k,k}\geq1$. Hence, we infer Claim~3 by letting $\de''$ be the minimum of $M_{k,k}M_{k,l}\de_1^2\de_0$ over all $k,l\in\A$ such that $M_{k,l}\geq1$.

We now prove the statement of the lemma. Let $g\in K$ and $k\in\A$ be taken from Claim~3.
By the primitivity of $M$, we have for all $j\in\A$ the existence of $j({1}),\ldots, j({n})\in\A$ such that $j(1)=j$, $j(n)=k$ and
$M_{j({s}),j({s+1})}\geq1$ for $s=1,\ldots,n$. We compute by iteration, using that $\rho\circ\si_{\zeta}$ is a quasicontraction, (see  Lemma~\ref{l:contraction}~(1.), (2.)), and employing the formula for $\tau$ in Lemma~\ref{l:contraction}~(3.),
\begin{align*}
\gm(\Phi_{j}^{(n)}(g),\Phi_{j}^{(n)}(h))&\leq \sum_{i(1),\ldots,i(n)\in\A,i(1)=j} \ap{\prod_{s=1}^{n}c_{i(s),i(s+1)}p_{i(s),i(s+1)}} \gm(\tau_{i(n)}(g),\tau_{i(n)}(h)).
\end{align*}
Let $J=\{(i(1),\ldots,i(n))\in\A^{n}\mid i(1)=j\}\setminus\{(j(1),\ldots,j(n))\}$.
We factor out $\gm_{\A}(g,h)$ and get, since $c_{i(s),i(s+1)}\leq 1$ and $\gm(\tau_{i(n)}(g),\tau_{i(n)}(h))\leq\gm_{\A}(g,h)$,
\begin{align*}
\ldots&\leq \ab{\sum_{(i(1),\ldots,i(n))\in J} {\prod_{s=1}^{n}p_{i(s),i(s+1)}} +\ap{\prod_{s=1}^{n}p_{j(s),j(s+1)}} \frac{\gm(\tau_{j(n)}(g),\tau_{j(n)}(h))}{\gm_{\A}(g,h)}} \gm_{\A}(g,h).
\end{align*}
As $\sum_{i(1),\ldots,i(n)} \prod_{s=1}^{n}p_{i(s),i(s+1)}=1$ and $j(n)=k$, we get
\begin{align*}
\ldots&\leq\ap{1-\ap{\prod_{s=1}^{n}p_{j(s),j(s+1)}}\ap{1- \frac{\gm(\tau_k(g),\tau_k(h))}{\gm_{\A}(g,h)}}}\gm_{\A}(g,h)\\ &\leq \ap{1-\ap{\prod_{s=1}^{n}p_{j(s),j(s+1)}}
\de''}\gm_{\A}(g,h),
\end{align*}
by using Claim~3 in the second estimate.

By our choice of $j(1),\ldots,j(n)$ the product over the $p_{j(s),j(s+1)}$'s is positive.  We take the minimum over all such positive products to obtain the desired constant $\de>0$.
\end{proof}
\end{lemma}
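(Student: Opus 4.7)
The plan is to reduce the composition $\Phi_\zeta^{(n)}$ to the contractive behaviour of $\tau$ alone, via the decomposition $\Phi_\zeta = \rho \circ \sigma_\zeta \circ \tau$ from Lemma~\ref{l:contraction}. Since $\rho$ is an isometry and $\sigma_\zeta$ is only guaranteed to be a quasicontraction on $\ov\h^\A$ (contraction would need $\Im \zeta_j > 0$, which we do not have), any genuine gain must come from $\tau$. I would therefore first establish that at some single label $k \in \A$ (which may depend on $g$), the map $\tau$ is already strictly contractive by a definite amount, and then exploit primitivity to spread this gain to every label after $n = n(M)$ iterations.

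For the first step, I would extract a uniform lower bound on the angular discrepancy. From the hypothesis, for every $g \in K'$ there exist $i,j \in \A$ with $\ma{\al_{i,j}(g,h)} \geq \eps$ for some fixed $\eps > 0$. Primitivity (M2) lets me connect $i$ to $j$ through an $M$-path of length $n$ with nonzero entries; writing $\al_{i,j}$ as a telescoping sum of $\al_{l(s),l(s+1)}$ along this path and applying the triangle inequality for $\ma{\cdot}$ forces some adjacent pair $(k,l)$ to satisfy $\ma{\al_{k,l}(g,h)} \geq \eps/n$, hence $\cos\al_{k,l}(g,h) \leq 1 - \de_0$ for some $\de_0 > 0$. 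For $g \in K \setminus K'$ the argument is even simpler, since any coordinate with $g_k = h_k$ yields $P_{k,l}(g,h) = 0$. In both cases, the explicit formula for $\gm(\tau_k(g),\tau_k(h))$ in Lemma~\ref{l:contraction}(3.) contains one summand that is strictly less than the corresponding weight; bounding all other summands by $1$, using $\sum_l p_{k,l}=1$, and exploiting that the compactness of $K$ provides a uniform positive lower bound on the weights $p_{j,l}(g) = M_{j,l}\Im g_l / \Im \tau_j(g)$, one arrives at $\gm(\tau_k(g),\tau_k(h)) \leq (1-\de'')\gm_\A(g,h)$ for a constant $\de'' > 0$ depending only on $K$ and $h$.

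The next step is to turn this one-label contraction into uniform contraction in $\gm_\A$ after $n$ iterations. Since $\rho\circ\sigma_\zeta$ is a $\gm_\A$-quasicontraction for every $\zeta\in\ov\h^\A$, iterating the convex-combination formula for $\tau$ expresses $\gm\ap{\Phi^{(n)}_{\zeta,j}(g), \Phi^{(n)}_{\zeta,j}(h)}$ as a sum over label-paths $(i(1),\ldots,i(n))$ with $i(1)=j$, each term weighted by a product $\prod_s p_{i(s),i(s+1)}$ that sums to $1$. Given $g$ and the label $k$ from the first step, primitivity supplies at least one path $j = j(1),\ldots,j(n) = k$ with $M_{j(s),j(s+1)}\geq 1$; the corresponding summand absorbs the $(1-\de'')$-factor, while every other summand is controlled by $\gm_\A(g,h)$. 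Compactness of $K$ forces the distinguished product $\prod_s p_{j(s),j(s+1)}$ to exceed a positive constant, and taking the minimum over all $j\in\A$ and all admissible paths yields the desired $\de>0$.

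The main obstacle is the combinatorial book-keeping: showing that the contraction gained along one favourable path genuinely survives when combined with the merely quasicontractive contribution of all other paths, uniformly in $\zeta(1),\ldots,\zeta(n)\in\ov\h^\A$ and in $g\in K$. One must be careful that the constants $\de_0,\de_1,\de''$ depend only on $K$ and $h$ (which is why the analysis is isolated to $\tau$, not $\sigma_\zeta$), and that the dependence of the ``good'' label $k$ on $g$ does not obstruct the final passage to a single $\de$ by taking minima. Everything else is tracking convex combinations.
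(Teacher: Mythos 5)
Your proposal reproduces the paper's argument essentially step by step: the same decomposition $\Phi_\zeta=\rho\circ\sigma_\zeta\circ\tau$ with the observation that only $\tau$ can contract when $\Im\zeta$ may vanish, the same use of primitivity plus the triangle inequality for $|\cdot|_{\arg}$ to force an adjacent pair $(k,l)$ with $M_{k,l}\geq 1$ and $\cos\al_{k,l}\leq 1-\de_0$ (with the $P_{k,l}=0$ shortcut when some $g_k=h_k$), the resulting single-label bound $\gm(\tau_k(g),\tau_k(h))\leq(1-\de'')\gm_\A(g,h)$ via the convex-combination formula of Lemma~\ref{l:contraction}(3.) together with a compactness lower bound $\de_1$ on the weights, and finally the spread to all labels after $n$ iterations along a primitivity path. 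This is the paper's proof, just compressed; the "combinatorial book-keeping" you flag as the main remaining work is precisely what the paper's Claims 1--3 and the closing iteration carry out.
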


\textbf{Remark.}
Clearly, for a ball $B$ about some element $h\in\h^{\A}$, there are always $g\in B$ such that $$\ma{\al_{j,k}(g,h)}=0.$$ In particular, this is the case for those $g$ which can be written as $rh$ for some $r>0$. However, we can deal with this problem by considering the part of a ball, where the assumptions of the lemma above fail separately. Details are worked out in the next subsection.

\subsection{Contraction properties of the iterated contraction map}
Recall that $\Sigma_1$ is the set of energies $E\in\R$ for which $\Phi_{E}$ has a fixed point in $\h^{\A}$ and $\Sigma_0$ is that  subset of $\Sigma_1$ for which the components of a fixed point to a given $E$ are linear multiples of each other. Note that for non regular tree operators the set  $\Sigma_0\subseteq\{0\}$ (see Lemma~\ref{l:Sigma0}). In this subsection we prove the following theorem.\medskip

\begin{thm}\label{t:Contraction}(Contraction in $(n+1)$ steps.) For arbitrary $E\in\Sigma_1\setminus\Sigma_0$ and a fixed point $h\in\h^{\A}$ of $\Phi_E$ there are $c\in[0,1)$ and $R>0$  such that for all $g\in B_{R}(h)$
\begin{align*}
\dist_{\h^{\A}}\ap{\Phi_{E}^{n+1}(g),\Phi_{E}^{n+1}(h)}\leq c\;\dist_{\h^{\A}}\ap{g,h},
\end{align*}
where $\Phi_{E}^{n+1}$ means that $\Phi_E$ is applied $(n+1)$ times.
\end{thm}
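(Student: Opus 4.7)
My plan is to exploit the decomposition $\Phi_E=\rho\circ\sigma_E\circ\tau$ from the previous subsection. Since $E\in\R$, Lemma~\ref{l:contraction}(1.) and (2.) tell us that both $\rho$ and $\sigma_E$ are isometries of $(\h^\A,\dist_{\h^\A})$, so every bit of contraction of $\Phi_E^{n+1}$ must come from iterating $\tau$. Quantitative contraction of $\tau$ is provided by Lemma~\ref{l:tau}, whose hypothesis requires the angles $\al_{k,l}(g,h)$ to stay uniformly bounded away from $0$. The difficulty is the radial locus of any ball around $h$, consisting of those $g$ whose increments $g_j-h_j$ all share a common argument; on that locus the hypothesis fails pointwise, and excluding $\Sigma_0$ is precisely what lets a single application of $\Phi_E$ rotate the locus off itself. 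Indeed, since $E\notin\Sigma_0$, by definition there exist $j_*,k_*\in\A$ with $\te_0:=\arg(h_{j_*}\ov{h_{k_*}})\ne0$, and since $h_{j_*},h_{k_*}\in\h$ one has $\te_0\in(-\pi,\pi)\setminus\{0\}$, so $2\te_0\ne0\pmod{2\pi}$. I split $B_R(h)=K^+\cup K^-$ for some small $R$ and threshold $\eps>0$, where $K^+:=\{g\in B_R(h):\max_{k,l\in\A}\ma{\al_{k,l}(g,h)}\ge\eps\}$.

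On $K^+$, applying Lemma~\ref{l:tau} with the relatively compact set $\ov{K^+}\cup\{h\}$ yields a uniform $\de>0$ with $\gm_\A(\Phi_E^n(g),h)\le(1-\de)\gm_\A(g,h)$. Appending one further application of $\Phi_E$, which is a quasicontraction by Lemma~\ref{l:contraction}, preserves this bound for $\Phi_E^{n+1}(g)$.

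On $K^-$, I use the identity
\begin{align*}
\Phi_{E,j}(g)-h_j=h_j\,\Phi_{E,j}(g)\sum_{k\in\A}M_{j,k}(g_k-h_k),
\end{align*}
which follows from $\Phi_{E,j}(h)=h_j$ and a direct computation of the difference of the two reciprocals defining $\Phi_{E,j}$. For $g\in K^-$, the increments $g_k-h_k$ lie in an $\eps$-wedge about some common argument $\te(g)$; nonnegativity of the $M_{j,k}$ keeps $\sum_k M_{j,k}(g_k-h_k)$ inside that wedge, so its argument is close to $\te(g)$. Since $\Phi_{E,j}(g)\to h_j$ as $g\to h$, the argument of $\Phi_{E,j}(g)-h_j$ is close to $2\arg(h_j)+\te(g)$, and therefore
\begin{align*}
\ma{\al_{j_*,k_*}(\Phi_E(g),h)}\longrightarrow\ma{2\te_0}>0\quad\text{as}\quad g\to h\text{ in }K^-.
\end{align*}
By continuity and compactness, for $R$ sufficiently small $\Phi_E(K^-)$ lies in a set on which the hypothesis of Lemma~\ref{l:tau} holds with some uniform $\eps'>0$. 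Applying that lemma to $(\Phi_E(g),h)$ yields contraction of $\Phi_E^{n+1}(g)=\Phi_E^n(\Phi_E(g))$ toward $h$ in $\gm_\A$, and Lemma~\ref{l:dist} converts both $\gm_\A$-contractions into $\dist_{\h^\A}$-contractions with a common constant $c\in[0,1)$.

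The main obstacle is the uniform treatment of $K^-$: one must show, uniformly in $g\in K^-$ and even near points where some $g_j=h_j$ makes $\al_{k,l}$ undefined, that $\Phi_E(K^-)$ lands in an angularly separated region where Lemma~\ref{l:tau} applies with a single $\eps'>0$. This uniformity rests entirely on the non-degeneracy $2\te_0\ne0\pmod{2\pi}$ inherited from $E\notin\Sigma_0$ via Lemma~\ref{l:Sigma0}; the rest is bookkeeping with the structural lemmas of this section.
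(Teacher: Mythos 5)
Your proposal reproduces the paper's strategy exactly: split $B_R(h)$ into a set $K^+$ where the angular hypothesis of Lemma~\ref{l:tau} holds and its complement $K^-$ (the radial locus), then show $\Phi_E(K^-)\subseteq K^+$ using the nondegeneracy $\te_0:=\arg(h_{j_*}\ov{h_{k_*}})\neq 0$ supplied by $E\notin\Sigma_0$; your identity $\Phi_{E,j}(g)-h_j=h_j\,\Phi_{E,j}(g)\,\tau_j(g-h)$ is, after taking arguments, precisely Lemma~\ref{l:taual}, and your wedge argument is the content of Lemma~\ref{l:al}. The one gap you flag -- the behaviour near $g$ with some $g_j=h_j$, where $\al_{k,l}$ is undefined -- is handled in the paper simply by defining $K^-$ (the paper's $B_<$) to require $g_j\neq h_j$ for all $j$, so that such $g$ are absorbed into $K^+$ (the paper's $B_\geq$), and on $K^+$ Claim~1 of Lemma~\ref{l:tau} gives $P_{k,l}(g,h)=0$ for some $k,l$ with $M_{k,l}\geq1$, which yields the contraction without ever needing the angle $\al_{k,l}$ to be defined.
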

The strategy of the proof which is given at the end of this subsection is the following: We consider a ball $B$ about a fixed point of $\Phi_{E}$. We decompose $B$ into a part $K$ which satisfies the sufficient criterion for uniform contraction, Lemma~\ref{l:tau}, and show that $\Phi_E$ maps the complement $B\setminus K$ into $K$. Then, again by Lemma~\ref{l:tau} we conclude uniform contraction on this part. Finally to conclude the statement for $d_{\h^{\A}}$ form $\gm_{\A}$ we use Lemma~\ref{l:dist}.

Note that $\tau:\h^{\A}\to\h^{\A}$ extends to a linear map $\C^{\A}\to\C^{\A}$. Moreover, it is easy to check that
$\Phi_{z,j}(g)\neq h_{j}$ implies $\tau_{j}(g-h)\neq 0$ for $j\in\A$.
\medskip

\begin{lemma}\label{l:taual}
Let $z\in\ov\h$ and $h\in\h^{\A}$ be a fixed point of $\Phi_z$. Then, for all $g\in\h^{\A}$, $j,k\in\A$,
\begin{align*}
\al_{j,k}\ap{\Phi_{z}(g),\Phi_{z}(h)} =\arg\ap{\tau_j(g-h)\ov{\tau_k(g-h)}} +\arg\ap{\Phi_{z,j}(g)\ov{\Phi_{z,k}(g)}}
+\arg\ap{h_{j}\ov{h_{k}}},
\end{align*}
where we additionally assume that $\Phi_{z,j}(g)\neq h_{j}$ and  $\Phi_{z,k}(g)\neq h_{k}$.
\begin{proof}
We calculate directly using the decomposition $\Phi_z=\rho\circ\si_z\circ\tau$
\begin{align*}
\al_{j,k}\ap{\Phi_{z}(g),\Phi_{z}(h)} &=\arg\ap{\frac{-1}{\si_{z,j}(\tau(g))}-\frac{-1}{\si_{z,j}(\tau(h))}} \ov{\ap{\frac{-1}{\si_{z,k}(\tau(g))}-\frac{-1}{\si_{z,k}(\tau(h))}}}\\
&=\arg\ap{\frac{\tau_j(g-h)}{\si_{z,j}(\tau(g))\si_{z,j}(\tau(h))}} \ov{\ap{\frac{\tau_k(g-h)}{\si_{z,k}(\tau(g))\si_{z,k}(\tau(h))}}}\\
&=\arg\ap{\tau_j(g-h) \ov{\tau_k(g-h)}}\ap{\Phi_{z,j}(g)\ov{\Phi_{z,k}(g)}} \ap{h_{j}\ov{h_{k}}},
\end{align*}
where we used $\Phi_{z,l}(\cdot)=-1/\si_{z,l}(\tau(\cdot))$, $l\in\A$, and the assumption $\Phi_{z}(h)=h$.
\end{proof}
\end{lemma}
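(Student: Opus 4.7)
My plan is to prove this identity by a direct algebraic manipulation that exploits the decomposition $\Phi_z=\rho\circ\sigma_z\circ\tau$ introduced before Lemma~\ref{l:contraction}, together with the hypothesis that $h$ is a fixed point of $\Phi_z$ and the fact that $\arg$ is a continuous group homomorphism $\C\setminus\ac{0}\to\Sp^{1}$ (so that arguments turn multiplication into addition and conjugation into negation).

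First I would use the fixed-point identity $\Phi_z(h)=h$ to replace $\Phi_{z,j}(h)$ and $\Phi_{z,k}(h)$ by $h_j$ and $h_k$ inside the definition of $\al_{j,k}$, obtaining $\al_{j,k}\ap{\Phi_z(g),\Phi_z(h)}=\arg\ap{(\Phi_{z,j}(g)-h_j)\,\ov{(\Phi_{z,k}(g)-h_k)}}$. Next, for each index $l\in\ac{j,k}$, I would compute $\Phi_{z,l}(g)-h_l$ by writing both terms as $-1/\sigma_{z,l}(\tau(\cdot))$, combining over a common denominator, and using that $\tau$ is linear (it extends naturally from $\h^{\A}$ to $\C^{\A}$). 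This produces the key identity
\[
\Phi_{z,l}(g)-h_l=\frac{\tau_l(g-h)}{\sigma_{z,l}(\tau(g))\,\sigma_{z,l}(\tau(h))}.
\]

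Substituting this (with the appropriate conjugate for $l=k$) into the expression for the left-hand side and splitting the resulting denominator as $[\sigma_{z,j}(\tau(g))\,\ov{\sigma_{z,k}(\tau(g))}]\cdot[\sigma_{z,j}(\tau(h))\,\ov{\sigma_{z,k}(\tau(h))}]$, I would then use once more that $\Phi_{z,l}(w)=-1/\sigma_{z,l}(\tau(w))$, together with the trivial equality $(-1)\ov{(-1)}=1$, to identify the reciprocals of the two bracketed factors as $\Phi_{z,j}(g)\,\ov{\Phi_{z,k}(g)}$ and $\Phi_{z,j}(h)\,\ov{\Phi_{z,k}(h)}=h_j\,\ov{h_k}$, respectively. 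Taking $\arg$ of the resulting multiplicative identity and invoking the homomorphism property yields exactly the three-term sum on the right-hand side of the claim.

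I do not foresee any hard step: the argument is pure bookkeeping along a short chain of substitutions. The only minor matter is ensuring every factor whose argument is taken is non-zero. The assumptions $\Phi_{z,j}(g)\neq h_j$ and $\Phi_{z,k}(g)\neq h_k$ make the two differences on the left non-zero and, through the displayed identity, force $\tau_j(g-h),\tau_k(g-h)\neq 0$ as well. Non-vanishing of $h_j,h_k,\Phi_{z,j}(g),\Phi_{z,k}(g)$ is automatic since each lies in $\h$; here one uses $M_{l,l}\geq 1$ together with $z\in\ov\h$ to see that $\sigma_{z,l}(\tau(g))=z+\tau_l(g)$ has strictly positive imaginary part, so its reciprocal is well defined and nonzero, and analogously for $h$.
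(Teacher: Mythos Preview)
Your proposal is correct and follows essentially the same route as the paper: both use the decomposition $\Phi_z=\rho\circ\sigma_z\circ\tau$, combine $-1/\sigma_{z,l}(\tau(g))+1/\sigma_{z,l}(\tau(h))$ over a common denominator to obtain $\tau_l(g-h)/[\sigma_{z,l}(\tau(g))\sigma_{z,l}(\tau(h))]$, and then identify the resulting factors with $\Phi_{z,j}(g)\ov{\Phi_{z,k}(g)}$ and $h_j\ov{h_k}$ via $\Phi_{z,l}=-1/\sigma_{z,l}\circ\tau$ and the fixed-point hypothesis. Your added remarks on the non-vanishing of all factors are a welcome bit of extra care but do not change the argument.
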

The idea now is to use the formula of the lemma above: If $\ma{\al_{j,k}(g,h)}$ is large for some $j,k$ we can apply the sufficient criterion for uniform contraction, Lemma~\ref{l:tau}, directly. Otherwise, we appeal to Lemma~\ref{l:taual} in the following way:
Suppose $\ma{\al_{j,k}(g,h)}$ is small for all $j,k$. Then, $\ma{\arg(\tau_{j}(g-h)\ov{\tau(g_{k}-h_{k})})}$ is small by a geometric argument. Moreover, if $g$ is very close to $h$, then the last two terms in the formula of the lemma are equal up to a small error. As we know from Lemma~\ref{l:Sigma0}, the last term is non zero except for a finite set of energies in the case of non regular trees. Lemma~\ref{l:taual} then proves that $\ma{\al_{j,k}(\Phi_E(g),h)}$ is large. Therefore,  we can apply the sufficient criterion for uniform contraction, Lemma~\ref{l:tau}, either for $g$ or $\Phi_{z}(g)$.

\medskip

\begin{lemma}\label{l:al}
For all $E\in\Sigma_1\setminus\Sigma_0$ and  a fixed point $h\in\h^{\A}$ of $\Phi_{E}$ there are $R>0$ and $\de>0$ such that for all $g\in B_{R}(h)$ with  $g_{j}\neq h_{j}$ and $\Phi_{z,j}(g)\neq h_{j}$ for all $j\in\A$,
\begin{align*}
\max_{j,k\in\A}\na{\al_{j,k}(g,h)}\geq \de \quad \mbox{ or }\quad \max_{j,k\in\A}\na{\al_{j,k}(\Phi_{E}(g),\Phi_{E}(h))}\geq \de.
\end{align*}
\begin{proof}
By the assumption $E\in \Sigma_1\setminus\Sigma_0$, there are $j,k\in\A$ such that $\de':=\ma{\arg(h_{j}\ov{h_{k}})}>0$. As $h_{j},h_{k}\in\h$, we have $\de'\in(0,\pi)$. We fix $j$, $k$ and $\de'$ for the rest of the proof and set
\begin{align*}
    \de:=\frac{1}{2}\min\set{\de',\pi-\de'}.
\end{align*}
Let $R>0$ be chosen so small, that for all $g\in B_{R}(h)$  and $j\in\A$ we have $\am{\arg(g_{j}\ov h_{j})}< \de$. By the triangle inequality of $\ma{\cdot}$ we get
\begin{align*}
\na{\arg\ap{g_{j}\ov{g_{k}}h_{j}\ov{h_{k}}}}
\geq \na{2\arg\ab{h_{j}\ov{h_{k}}}} -\na{\arg\ab{g_{j}\ov{h_{j}}}}-\na{\arg\ab{\ov{g_{k}}{h_{k}}}}\geq
4\de-2\de=2\de,
\end{align*}
for all $g\in B_{R}(h)$. Since $\Phi_{E}$ is a quasicontraction and $h$ is a fixed point, we have $\Phi_E(B_R(h))\subseteq B_R(h)$. Therefore,  we can conclude by the previous inequality
\begin{align*}
\na{\arg\ap{\Phi_{E,j}(g)\ov{\Phi_{E,k}(g)}} +\arg\ap{h_{j}\ov{h_{k}}}}\geq  2\de.
\end{align*}
Now, since $\tau_{j}$ maps the cone  spanned by the vectors  $g_{k}-h_{k}$, $k\in\A$, into itself we get
\begin{align*}
\max_{j,k\in\A}\na{\arg \ap{\tau_{j}(g-h)\ov{\tau_k(g-h)}}}\leq\max_{j,k\in\A}\na{\arg \ap{(g_{j}-h_{j})\ov{(g_{k}-h_{k})}}}.
\end{align*}
Combining this with Lemma~\ref{l:taual} and the inequality above, we obtain $\na{\al_{j,k}(\Phi_{E}(g),\Phi_{E}(h))}\geq2\de-\na{\al_{j,k}(g,h)}\geq\de$ whenever  $g\in B_{R}(h)$ is such that $\ma{\al_{l,m}(g,h)}<\de$  for all $l,m\in\A$.
\end{proof}
\end{lemma}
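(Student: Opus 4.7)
The plan is to exploit the decomposition from Lemma~\ref{l:taual}, which writes
\[
\al_{j,k}(\Phi_E(g),\Phi_E(h)) = \arg\bigl(\tau_j(g-h)\ov{\tau_k(g-h)}\bigr) + \arg\bigl(\Phi_{E,j}(g)\ov{\Phi_{E,k}(g)}\bigr) + \arg(h_j\ov{h_k})
\]
as a sum of three argument contributions. The hypothesis $E \in \Sigma_1 \setminus \Sigma_0$ is used precisely to guarantee that the ``bias'' term $\arg(h_{j_0}\ov{h_{k_0}})$ is bounded away from zero for some pair of labels $j_0,k_0 \in \A$, and this nonzero bias is what drives the dichotomy.

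First I would pick $j_0, k_0 \in \A$ with $\de' := \ma{\arg(h_{j_0}\ov{h_{k_0}})} > 0$, which exists by the definition of $\Sigma_0$; since $h_{j_0},h_{k_0} \in \h$ we automatically have $\de' \in (0,\pi)$. Set $\de := \tfrac{1}{2}\min\{\de',\pi-\de'\}$, so that $\ma{2\arg(h_{j_0}\ov{h_{k_0}})} = 4\de$ (checking the two sub-cases $\de' \leq \pi/2$ and $\de' > \pi/2$ separately). Using continuity of $\arg$ on $\C \setminus \{0\}$ and $\arg(h_j\ov{h_j}) = 0$, choose $R > 0$ so small that $\ma{\arg(g_j\ov{h_j})} < \de$ for every $j \in \A$ and every $g \in B_R(h)$. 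Since $\arg$ is a group homomorphism, the triangle inequality for $\ma{\cdot}$ then yields $\ma{\arg(g_{j_0}\ov{g_{k_0}}) + \arg(h_{j_0}\ov{h_{k_0}})} \geq 4\de - 2\de = 2\de$ on $B_R(h)$.

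Next, since $\Phi_E$ is a quasi-contraction in $\dist_{\h^{\A}}$ and fixes $h$ (Lemma~\ref{l:contraction}), it preserves $B_R(h)$. Applying the previous inequality at $\Phi_E(g) \in B_R(h)$ in place of $g$ gives $\ma{\arg(\Phi_{E,j_0}(g)\ov{\Phi_{E,k_0}(g)}) + \arg(h_{j_0}\ov{h_{k_0}})} \geq 2\de$. Substituting into the identity of Lemma~\ref{l:taual}, the reverse triangle inequality reduces the problem to bounding the $\tau$-contribution. Here the key geometric observation is that $\tau_j(g-h) = \sum_k M_{j,k}(g_k - h_k)$ is a nonnegative linear combination of the vectors $g_k - h_k$, so its argument lies in the convex arc on $\Sp^1$ spanned by the $\arg(g_k - h_k)$, provided these cluster within an arc of length strictly below $\pi$; the latter can be arranged by shrinking $R$ further. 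Consequently $\ma{\arg(\tau_{j_0}(g-h)\ov{\tau_{k_0}(g-h)})} \leq \max_{l,m}\ma{\al_{l,m}(g,h)}$, and the dichotomy follows: if $\max_{l,m}\ma{\al_{l,m}(g,h)} \geq \de$ we are in the first alternative, while otherwise the reverse triangle inequality applied to the three-term decomposition forces $\ma{\al_{j_0,k_0}(\Phi_E(g),\Phi_E(h))} \geq 2\de - \de = \de$.

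The step demanding most care is the convex-arc bound for the $\tau$-term. It is geometrically transparent --- positive combinations of complex numbers with tightly clustered arguments cannot escape that angular sector --- but it relies on having all the differences $g_k - h_k$ close in direction, which is why $R$ must be shrunk so that both $\ma{\arg(g_j\ov{h_j})}$ and the angular spread of $\{g_k - h_k\}_k$ are well controlled. The remaining steps are routine applications of the triangle inequality for $\ma{\cdot}$ together with the group-homomorphism property of $\arg$ and the identity in Lemma~\ref{l:taual}.
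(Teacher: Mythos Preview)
Your approach mirrors the paper's proof almost exactly: same choice of $j_0,k_0,\de',\de,R$, same use of the quasi-contraction property to transfer the $2\de$ bound to $\Phi_E(g)$, same cone argument for the $\tau$-term, and same final dichotomy via Lemma~\ref{l:taual}. The one inaccuracy is your claim that the angular clustering of the differences $g_k-h_k$ needed for the cone argument ``can be arranged by shrinking $R$ further'': this is false, since no matter how close $g$ is to $h$ in $\dist_{\h^{\A}}$ the vectors $g_k-h_k$ can point in arbitrary directions (take e.g.\ $g_k = h_k + \eps e^{i\theta_k}$ for any choice of $\theta_k$). Fortunately the clustering is available for free precisely in the ``else'' case of the dichotomy, where by assumption $\max_{l,m}\ma{\al_{l,m}(g,h)}<\de\leq\pi/4$, and that is the only place you actually use the cone bound. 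So the argument is easily repaired by dropping the incorrect extra shrinking of $R$ and invoking the clustering only under the hypothesis of the second alternative, which is exactly how the paper proceeds.
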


We now prove Theorem~\ref{t:Contraction}.

\begin{proof}[Proof of Theorem~\ref{t:Contraction}]
Let $E\in\Sigma_1\setminus\Sigma_0$ and $h\in\h^{\A}$ be a fixed point of $\Phi_{E}$. Let $R>0$ and $\de>0$ be taken from Lemma~\ref{l:al}. We divide the set $B_R(h)$ into two disjoint  subsets
\begin{align*}
B_{\geq}(h)&:=\set{g\in B_{R}(h)\mid g_{j}=h_{j} \mbox{ for some } j\in\A\mbox{ or } \max_{j,k\in\A}\na{\al_{j,k}(g,h)}\geq\de},\\
B_{<}(h)&:=\set{g\in B_{R}(h)\mid g_{j}\neq h_{j} \mbox{ for all } j\in\A\mbox{ and }  \max_{j,k\in\A}\na{\al_{j,k}(g,h)}< \de}.
\end{align*}
We first apply Lemma~\ref{l:tau} with $K=B_{\geq}(h)$: As $\Phi_{E}$ is a quasicontraction, we obtain for $g\in B_{\geq}(h)$
\begin{align*}
\gm_{\A}\ap{\Phi_E^{n+1}(g),\Phi_E^{n+1}(h)}\leq \gm_{\A}\ap{\Phi_E^{n}(g),\Phi_E^{n}(h)} \leq (1-\de')\gm_{\A}(g,h),
\end{align*}
with some $\de'>0$ which is independent of $g$.
For $g\in B_{<}(h)$ we have by Lemma~\ref{l:al}, as $\Phi_{E}$ is a quasi contraction and $h$ is a fixed point,
\begin{align*}
    \Phi_{E}(B_{<}(h))\subseteq B_{\geq}(h).
\end{align*}
Therefore,   Lemma~\ref{l:tau} with $K= \Phi_{E}(B_{<}(h))$ applied to $\Phi_E(g)$ for $g\in B_{<}(h)$ yields
\begin{eqnarray*}
\gm_{\A}\ap{\Phi_E^{n+1}(g),\Phi_E^{n+1}(h)} &=&  \gm_{\A}\ap{\Phi_E^{n}(\Phi_{E}(g)),\Phi_E^{n}(\Phi_{E}(h))}\\
 & \leq &  (1-\de')\gm_{\A}(\Phi_{E}(g),\Phi_{E}(h))\\
 & \leq & (1-\de')\gm_{\A}(g,h).
\end{eqnarray*}
By Lemma~\ref{l:dist},
we get the existence of $c\in[0,1)$ such that
$$\dist_{\h^{\A}}\ap{\Phi_E^{n+1}(g),\Phi_E^{n+1}(h)} \leq c\, \dist_{\h^{\A}}(g,h),$$
for $g\in B_R(h)$ since $\gm_{\A}\ap{\Phi_E^{n+1}(g),\Phi_E^{n+1}(h)} \leq (1-\de')\gm_{\A}(g,h)$ for $g\in B_R(h)$.
\end{proof}

\subsection{Continuity and stability of fixed points}
In this subsection we will use Theorem~\ref{t:Contraction} proven above to show that fixed points depend continuously on the energy and the potential.

\begin{thm}\label{t:continuity}(Continuity, uniqueness and stability of fixed points.) Let  $E\in\Sigma_1\setminus\Sigma_0$ be given and $h\in \h^{\A}$ be a  fixed point of $\Phi_E$. Then there is $r>0$ such that for all $z\in\ov\h$ with $|z-E|\leq r$ the map $\Phi_{z}$ has a unique fixed point $h(z)$ which depends  continuously on   $z$.
In particular,  the set $\Sigma_1\setminus\Sigma_0$ is open in $\R$. Furthermore, there exists $R>0$ such that for all $\xi\in\ov\h^{\A\times\N_0}$, with components $\xi_j(k)$, $j\in\A$, $k\in\N_0$ satisfying $|\xi_j(k)-E|\leq r$, the inclusion
\begin{align*}
\Phi_\zeta^{(m)}(B_R(h))\subseteq B_R(h)
\end{align*}
holds for all $m\in\N$, where  $\Phi_{\zeta}^{(m)}:=\Phi_{\zeta(1)}\circ \cdot \circ \Phi_{\zeta(m)}$.
\begin{proof}Let $U_r(E):=\{(\zeta(1),\ldots,\zeta(n+1))\in\ov\h^{\A}\mid |\zeta_j(m)-E|\leq r $ for all $j\in\A,m=1,\ldots,n+1\}$, where $n=n(M)$ is the primitivity exponent of $M$.
Define the function
\begin{align*}
d:[0,\infty)\times[0,\infty)\to[0,\infty),\quad
(r,R)\mapsto\max_{\zeta\in U_r(E)}\max_{g\in B_R(h)}\dist_{\h^{\A}}(\Phi^{(n+1)}_{\zeta}(g),h),
\end{align*}
which is continuous and satisfies $d(r,R)\to0$ for $r,R\to 0$ by continuity of $\Phi_{\zeta}^{(n+1)}(g)$ in $g$ and $\zeta$ and continuity of $\dist_{\h^{\A}}$.
Let $c$ be the contraction coefficient and $R>0$ be taken from Theorem~\ref{t:Contraction}.
For arbitrary small $\eps\in(0,R)$, let now $r>0$ be such that $d(r,\eps)\leq (1-c)\eps$. Then, for all $\zeta\in U_{r}(E)$ with and $g\in B_{\eps}(h)$ we have by Theorem~\ref{t:Contraction} as $h$ is a fixed point of $\Phi_{E}$
\begin{align*}
\dist_{\h^{\A}}(\Phi^{(n+1)}_{\zeta}(g),h)&\leq \dist_{\h^{\A}}(\Phi^{(n+1)}_{\zeta}(g),\Phi^{n+1}_{E}(h) ) +
\dist_{\h^{\A}}(\Phi^{n+1}_{E}(g),\Phi^{n+1}_{E}(h))\\
&\leq d(r,\eps)+c\,\dist_{\h^{\A}}(g,h)\leq (1-\eps)c+c\eps=\eps.
\end{align*}
We conclude $\Phi^{(n+1)}_{\zeta}(g)\in B_{\eps}(E)$.\\ Hence, the last statement follows as any $m\in\N$ can be decomposed into $m=kn+l$, $k,l\in\N_0$, $l\leq n$ and $\Phi_{\zeta}^{(l)}$ maps $B_{\eps}(h)$ into a compact ball.\\
Let us turn to the first statements. We now consider $\zeta(1)=\ldots=\zeta(n+1)=(z,\ldots,z)$. From the considerations above we conclude that any accumulation point of $(\Phi_{z}^{m}(g))_{m\in\N}$ lies in $B_\eps(h)$ for $g\in B_{\eps}(h)$ and $z$ sufficiently close to $E$. For $z\in\h$, we know by Proposition~\ref{p:FHS} that these accumulation points must actually be fixed points of $\Phi_{z}$. Moreover, they are unique. Therefore, it easily follows that $h$ is the unique fixed point of $\Phi_{E}$. As this uniqueness holds for any $E\in\Sigma_1\setminus \Sigma_0$ the statement follows in particular for all $z\in\ov\h$ with $|z-E|\leq r$ for some fixed $E$.
\end{proof}
\end{thm}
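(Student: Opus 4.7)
The plan is to use Theorem~\ref{t:Contraction}, which gives strict contraction of $\Phi_E^{n+1}$ on a ball $B_R(h)$, as the main engine, and to bootstrap from it via a standard Banach fixed point argument with parameter dependence. The key auxiliary object is the quantity
\begin{align*}
d(r,R) := \max_{\zeta \in U_r(E)} \max_{g \in B_R(h)} \dist_{\h^\A}\!\ap{\Phi_\zeta^{(n+1)}(g),h},
\end{align*}
where $U_r(E)$ denotes tuples $(\zeta(1),\dots,\zeta(n+1))$ with every component within distance $r$ of $E$. Joint continuity of $\Phi_\zeta^{(n+1)}(g)$ in $g$ and $\zeta$, together with $\Phi_E^{n+1}(h)=h$, ensures $d(r,R)\to 0$ as $r,R\downarrow 0$.

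First I would prove the invariance statement. Letting $c<1$ be the contraction constant from Theorem~\ref{t:Contraction}, I pick $\eps\in(0,R)$ and choose $r>0$ so that $d(r,\eps)\leq(1-c)\eps$. For $g\in B_\eps(h)$ and $\zeta\in U_r(E)$, the triangle inequality combined with Theorem~\ref{t:Contraction} applied to the $\Phi_E$-iterate yields
\begin{align*}
\dist_{\h^\A}\!\ap{\Phi_\zeta^{(n+1)}(g),h} \leq d(r,\eps) + c\,\dist_{\h^\A}(g,h) \leq \eps,
\end{align*}
so $B_\eps(h)$ is invariant under every $(n+1)$-fold composition $\Phi_\zeta^{(n+1)}$ with parameters $r$-close to $E$. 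For general $m$, writing $m=k(n+1)+l$ with $0\leq l\leq n$, one iterates the $(n+1)$-step invariance $k$ times and absorbs the remaining $l$ steps into the larger ball $B_R(h)$; this is harmless since $\Phi_\zeta$ is a quasicontraction (Lemma~\ref{l:contraction}) and the closure of $B_\eps(h)$ is compact in $\h^\A$.

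Specializing to constant parameters $\zeta(m)=(z,\dots,z)$ with $|z-E|\leq r$, the same estimate shows $\Phi_z^{n+1}$ maps $B_\eps(h)$ into itself; moreover a perturbative repetition of the contraction argument yields a uniform contraction constant $c'<1$ for $\Phi_z^{n+1}$ on $B_\eps(h)$ after possibly shrinking $r$. Banach's fixed point theorem then produces a unique fixed point $h(z)\in B_\eps(h)\subseteq\h^\A$ of $\Phi_z^{n+1}$, which is automatically a fixed point of $\Phi_z$ (because $\Phi_z(h(z))$ is also a fixed point of $\Phi_z^{n+1}$ lying in the invariant ball, hence must coincide with $h(z)$). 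For $z\in\h$ it agrees with $\Gm(z)$ by Proposition~\ref{p:FHS}. Continuity $z\mapsto h(z)$ follows from a standard contraction estimate: the identities $\Phi_z^{n+1}(h(z))=h(z)$ and $\Phi_{z'}^{n+1}(h(z'))=h(z')$ combined with the triangle inequality give
\begin{align*}
\dist_{\h^\A}(h(z),h(z')) \leq \frac{1}{1-c'}\,\dist_{\h^\A}\!\ap{\Phi_z^{n+1}(h(z')),\Phi_{z'}^{n+1}(h(z'))},
\end{align*}
and the right-hand side vanishes as $z'\to z$ by continuity of the recursion map in its parameter.

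Openness of $\Sigma_1\setminus\Sigma_0$ in $\R$ then drops out: for real $z$ with $|z-E|\leq r$ the unique fixed point $h(z)$ lies in $\h^\A$, witnessing $z\in\Sigma_1$; and by choosing $\eps$ small enough at the start one ensures $h(z)$ remains in the open set where $\arg h_j(z)\ov{h_k(z)}\neq 0$ for some $j,k$ (a condition $h$ itself satisfies since $E\notin\Sigma_0$), so $z\notin\Sigma_0$. The main obstacle is bookkeeping: contraction is available only for the $(n+1)$-fold iterate and only in a ball around a known fixed point, so one must transfer it uniformly to perturbed parameters and pass from $(n+1)$-step to single-step invariance—both issues are handled cleanly by the control function $d(r,R)$ and the decomposition $m=k(n+1)+l$.
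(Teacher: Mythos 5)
Your proof proposal diverges from the paper in a way that introduces a genuine gap, centered on how you try to extract uniqueness and continuity from Theorem~\ref{t:Contraction}. That theorem gives only the one-sided attraction estimate
\begin{align*}
\dist_{\h^{\A}}\ap{\Phi_E^{n+1}(g),\,\Phi_E^{n+1}(h)}\leq c\,\dist_{\h^{\A}}(g,h),\qquad g\in B_R(h),
\end{align*}
where $h$ is the \emph{fixed} distinguished point; it is not a pairwise contraction estimate between arbitrary $g,g'\in B_R(h)$. (This is forced by the mechanism of the proof: Lemmas~\ref{l:al} and~\ref{l:tau} exploit the angular condition $\arg(h_j\ov{h_k})\neq 0$, available only for the special point $h$.) Consequently you cannot invoke Banach's fixed point theorem for $\Phi_z^{n+1}$, and the continuity estimate you write, which uses the factor $1/(1-c')$ coming from a pairwise Lipschitz bound, is unavailable. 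The further claim that a ``perturbative repetition of the contraction argument'' produces a uniform constant $c'<1$ for $\Phi_z^{n+1}$ on $B_\eps(h)$ is not a routine step either: Lemma~\ref{l:al} requires $h$ to be a fixed point of the map in question, which fails for $z\neq E$, so one would have to rebuild the angle estimates around an a priori unknown fixed point of $\Phi_z$ — precisely the circularity the argument must avoid.

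The paper's proof takes a different route that dispenses with any contraction statement for $\Phi_z^{n+1}$. It establishes only the invariance $\Phi^{(m)}_\zeta(B_\eps(h))\subseteq B_\eps(h)$ (your first half is essentially identical to this, and correct) and then, for $z\in\h$, appeals to Proposition~\ref{p:FHS}, whose existence and uniqueness of the fixed point $\Gm(z)$ come from the strict contraction of $\sigma_{\zeta}$ due to $\Im z>0$, a mechanism entirely independent of the angular estimates. The invariance then forces $\Gm(z)\in B_\eps(h)$, and letting $\eps\downarrow 0$ yields $\Gm(z)\to h$ as $\h\ni z\to E$; in particular $h$ is the unique fixed point of $\Phi_E$ and the dependence is continuous. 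Real $z$ near $E$ are then handled by accumulation points inside the relatively compact ball $B_\eps(h)\subset\h^\A$ and by observing that, for $\eps$ small, $B_\eps(h)$ avoids the parallel locus, so $z\in\Sigma_1\setminus\Sigma_0$. You would need to adopt this (or some comparable) substitute for the Banach step to close the gap.
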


\section{Proof of the theorems}
In this section we prove Theorem~\ref{main1}, Theorem~\ref{main2} and Theorem~\ref{main3}. In \eqref{e:Gmdef} we defined the vector  $\Gm(z)\in \h^{\A}$ via the truncated Green functions $\Gm_{a(x)}:=\Gm_{x}(z,\Lp)$, $x\in V$. In the first subsection we study the Green function in the limit $\Im z\downarrow 0$ and then we turn to the proofs of the theorems in the following subsection.

\subsection{The Green function in the limit} Define $\mathcal{U}$ to be the system of all  open sets $U\subset \R$ such that for each $x\in V$
 the function $\h\longrightarrow \h,\; z\mapsto \Gm_{x}(z,\Lp)$ can uniquely be extended to a  continuous
 function from  $\h\cup U$ to $ \h$ and set

\begin{align*}
\Sigma:=\bigcup_{U\in \mathcal{U}} U.
\end{align*}
This entails that for $E\in\Sigma$  and $x\in  V$ the limits  $\Gm_{x}(E,\Lp)=\lim_{\eta\downarrow0}\Gm_{x}(E+i\eta,\Lp)$
exist, are continuous in $E$ and $\Im \Gm_{x}(E,\Lp)>0$.

The following theorem directly implies Theorem~\ref{main1}.

\begin{thm}\label{t:Sigma}
The set $\Sigma$ consists of finitely many open intervals and
\begin{align*}
    \si(\Lp)=\clos \Sigma.
\end{align*}
Moreover, for every $x\in V$ the Green function
$\h\to\h$, $z\mapsto G_{x}(z,\Lp)$,
has a unique continuous extension to $\h\cup\Sigma \to\h$ and is uniformly bounded in $z$.
\end{thm}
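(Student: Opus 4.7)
The plan is to identify $\Sigma$ essentially with the interior of $\Sigma_1$ (up to at most the single exceptional point from $\Sigma_0$), to deduce the spectral identity from boundary behavior of the Green function, and to propagate all results from the truncated Green functions $\Gm_x$ to the full $G_x$ via Propositions~\ref{p:G} and~\ref{p:G2}.

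First I would establish the sandwich $\Sigma_1\setminus\Sigma_0\subseteq\Sigma\subseteq\Sigma_1$. For $E\in\Sigma_1\setminus\Sigma_0$, Theorem~\ref{t:continuity} provides an $r>0$ and a continuous map $z\mapsto h(z)\in\h^\A$ on $\{|z-E|\leq r\}\cap\ov\h$ which is the unique fixed point of $\Phi_z$ in a small ball; by Proposition~\ref{p:FHS} it coincides with $\Gm(z)$ for $z\in\h$, so each $\Gm_x(\cdot,\Lp)$ extends continuously into $\h$ on $(E-r,E+r)$. The reverse inclusion $\Sigma\subseteq\Sigma_1$ is immediate by passing to boundary values in~\eqref{e:Gm2}. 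Combined with Lemma~\ref{l:milnor} (finitely many components of $\Sigma_1$) and Lemma~\ref{l:Sigma0} ($\Sigma_0\subseteq\{0\}$ in the non-regular case), this identifies $\Sigma$ as an open subset of $\Sigma_1$ whose complement in $\Sigma_1$ contains at most one point, hence as a finite disjoint union of open intervals. The regular case must be handled separately but elementarily: the recursion reduces to a single quadratic with explicit continuous boundary values having $\Im>0$ precisely on the interior of $\Sigma_1$.

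Next I would prove $\si(\Lp)=\clos\Sigma$. For $\Sigma\subseteq\si(\Lp)$, Proposition~\ref{p:G2}(3) promotes the extension from $\Gm_x$ to $G_x$ with $\Im G_x(E,\Lp)>0$ on $\Sigma$; Stieltjes inversion then yields an absolutely continuous spectral measure with strictly positive density on $\Sigma$, so $\Sigma\subseteq\mathrm{supp}(\mu_o)\subseteq\si(\Lp)$. For the reverse inclusion, fix $E\notin\clos\Sigma$ and, using that the sandwich yields $\clos\Sigma=\clos\Sigma_1$, choose an open neighborhood $U$ of $E$ disjoint from $\Sigma_1$. For each $E'\in U$ the map $\Phi_{E'}$ has no fixed point in $\h^\A$, so by Lemma~\ref{l:accumulationpoints} every accumulation point of $\Gm(E'+i\eta)$ as $\eta\downarrow 0$ lies in $\R^\A$, which forces $\Im\Gm_x(E'+i\eta,\Lp)\to 0$. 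Lemma~\ref{l:Gmbounds} and Proposition~\ref{p:G2}(1),(2) then lift this to $\Im G_x$, and the Poisson representation together with dominated convergence forces $\mu_x(U)=0$ for every $x\in V$, giving $U\cap\si(\Lp)=\emptyset$.

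Finally, the claims about the full Green function $G_x$ propagate inductively along the unique path $o=x_0,x_1,\ldots,x_n=x$ from the root to $x$ using the identity $G_{x_{i+1}}=\Gm_{x_{i+1}}+\Gm_{x_{i+1}}^2\,G_{x_i}$ of Proposition~\ref{p:G}, starting from $G_o=\Gm_o$. Continuity and $\h$-valuedness on $\h\cup\Sigma$ come from Proposition~\ref{p:G2}(3), uniqueness from holomorphy on $\h$ combined with continuity on the open set $\h\cup\Sigma$, and uniform boundedness from iterating Lemma~\ref{l:Gmbounds} through Proposition~\ref{p:G2}(1). The main obstacle I anticipate is the reverse spectral inclusion $\si(\Lp)\subseteq\clos\Sigma$: passing from pointwise vanishing of $\Im G_x(E'+i\eta,\Lp)$ to vanishing of $\mu_x$ on a full neighborhood $U$ requires both the uniform $L^\infty$ bound and the absence of $\h^\A$-fixed points of $\Phi_{E'}$ for \emph{every} $E'\in U$. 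This forces one to choose $U$ disjoint from $\Sigma_1$ rather than merely from $\Sigma$, and the legitimacy of this choice rests on the identity $\clos\Sigma=\clos\Sigma_1$, which in turn leans on Lemma~\ref{l:Sigma0}.
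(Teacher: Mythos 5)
Your strategy is essentially the paper's: sandwich $\Sigma$ between $\Sigma_1\setminus\Sigma_0$ and $\Sigma_1$, use the Milnor finiteness of $\Sigma_1$ and Lemma~\ref{l:Sigma0}, pass boundary behavior from $\Gm_x$ to $G_x$ via Propositions~\ref{p:G} and~\ref{p:G2}, and deduce the spectral identity from positivity resp.\ vanishing of $\Im G_x$ in the limit.

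There is, however, one genuine gap, and you have already pointed at it yourself without filling it: the claim that \emph{``the sandwich yields $\clos\Sigma=\clos\Sigma_1$''} is not correct as stated. The sandwich together with $\Sigma_0\subseteq\{0\}$ only gives $\clos(\Sigma_1\setminus\{0\})\subseteq\clos\Sigma\subseteq\clos\Sigma_1$, and these two closures can in principle differ, namely exactly when $\{0\}$ is an \emph{isolated} component of $\Sigma_1$. Recall that $\Sigma_1$ is the projection of a semi-algebraic set and Milnor's theorem allows degenerate components; nothing in Lemmas~\ref{l:milnor} or~\ref{l:Sigma0} excludes a priori that one component of $\Sigma_1$ is the single point $\{0\}$. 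In that scenario your reverse-inclusion argument collapses for $E=0$: you cannot choose an open neighborhood $U\ni 0$ disjoint from $\Sigma_1$, so the Poisson/dominated-convergence step gives $\mu_x(U)=0$ only for $U$ that avoid $0$, leaving a possible atom at $0$ unaccounted for.

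The paper closes precisely this gap by a separate argument: $\clos(\Sigma_1)$ and $\clos(\Sigma_1\setminus\Sigma_0)$ can differ by at most one isolated point, and an isolated point of the spectrum can only support a point measure; but the uniform boundedness of $\Gm_x$ from Lemma~\ref{l:Gmbounds}, propagated to $G_x$ via Proposition~\ref{p:G2}(1), forces $\mu_x(\{0\})=0$ for every $x$ (a nonzero atom would make $\eta\,\Im G_x(i\eta)$ bounded below, contradicting boundedness of $G_x$). You should insert this step: run your dominated-convergence argument to get $\mu_x(\R\setminus\clos\Sigma_1)=0$, and then separately rule out an atom at the one possible exceptional point $0\in\clos\Sigma_1\setminus\clos\Sigma$ using the uniform $L^\infty$ bound. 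With that addition your proof is complete and matches the paper's.

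One minor remark: in the forward inclusion you write that Stieltjes inversion yields an absolutely continuous measure with strictly positive density on $\Sigma$, hence $\Sigma\subseteq\mathrm{supp}\,\mu_o$. This is fine for the root; for general $x\in V$ you need $\Im G_x(E)>0$ on $\Sigma$, which you correctly obtain from Proposition~\ref{p:G2}(3). The paper phrases the same point via vague convergence of $\Im G_x(E+i\eta)\,dE$ to $\mu_x$, which is equivalent.
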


For a regular tree with branching number $k$ the theorem above is well known. In this case,  the vector $\Gm(z)$ consists of only one component which can be explicitly calculated from the recursion formula \eqref{e:Gm2}. In the limit $\Im z\downarrow 0$, i.e., for $z\to E$, one gets $\Gm(E)=(-E+\sqrt{E^2-4k})/2k$. As this is well known, we will restrict our attention for the rest of the section to the case of non regular trees. In particular, in this case the set $\Sigma_0\subseteq\{0\}$, see Lemma~\ref{l:Sigma0} in Subsection~\ref{ss:Sigma0}. (As for regular trees $\Sigma_0=\Sigma_1$ and thus $\Sigma_1\setminus\Sigma_0=\emptyset$ all the following statements are true, but, however, pointless in this case.)

\begin{lemma}\label{l:Gmlimits}  Let $E\in\R\setminus\Sigma_0$. Then, the following holds:  \\
(1.) The limit $\Im    \Gm(E):=\lim_{\eta\downarrow 0}\Im \Gm(E+i\eta)$ exists in $[0,\infty)^{\A}$  and it is continuous in $E$ on $\R\setminus \Sigma_0$. \\
(2.)  $E\in\Sigma_1$ is equivalent to $\Im \Gm_j(E)>0$ for some $j\in\A$. In this case, the limit
\begin{align*}
\Gm(E):=\lim_{\eta\downarrow 0}\Gm(E+i\eta)
\end{align*}
exists in $\h^{\A}$ and it is continuous in a neighborhood of $E$.
\begin{proof}
As $\Gm(z)$ satisfies the recursion relation \eqref{e:Gm2}, it is a fixed point of $\Phi_{z}$ for $z\in\h$. Moreover, by Proposition~\ref{p:FHS} it is the unique fixed point. By Lemma~\ref{l:accumulationpoints} the accumulation points of these fixed points as $\Im z\downarrow0$ are again fixed points of $\Phi_{E}=\Phi_{\Re z}$ and lie either in $\R^{\A}$ or in $\h^{\A}$.\\
Let $E\in\R\setminus \Sigma_0$. Suppose first there is an accumulation point $h$ of $\Gm(z)$ as $z\to E$  which lies in $\h^{\A}$. Then, by uniqueness and continuity of fixed points, Theorem~\ref{t:continuity}, this $h$ is the unique fixed point of $\Phi_E$ and we denote  $\Gm(E):=h$. The continuity follows also by Theorem~\ref{t:continuity}.\\
If, on the other hand,  $h\in\R^{\A}$ for all accumulation points we conclude that $\Im \Gm(z)\to0$ as $z\to E$. Hence, we know that the limit of the imaginary part exists.\\
This gives the assertions (1.) and (2.).
\end{proof}
\end{lemma}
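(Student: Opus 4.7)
The plan is to combine Proposition~\ref{p:FHS}, the uniform bounds of Lemma~\ref{l:Gmbounds}, the dichotomy for accumulation points in Lemma~\ref{l:accumulationpoints}, and the stability statement of Theorem~\ref{t:continuity} by distinguishing whether $E\in\Sigma_1$ or not. The key observation driving both (1) and (2) is that the limiting behaviour of $\Gm(z)$ as $\Im z\downarrow 0$ is entirely controlled by the fixed point structure of $\Phi_E$, which is already understood outside of $\Sigma_0$.

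Fix $E\in\R\setminus\Sigma_0$. For any sequence $z_n\in\h$ with $z_n\to E$, Proposition~\ref{p:FHS} says that $\Gm(z_n)\in\h^{\A}$ is the unique fixed point of $\Phi_{z_n}$, while Lemma~\ref{l:Gmbounds} bounds $|\Gm_j(z_n)|$ uniformly from above, so the sequence is precompact in $\ov\h^{\A}$. By Lemma~\ref{l:accumulationpoints} any accumulation point is a fixed point of $\Phi_E$ lying in $\h^{\A}$ or in $\R^{\A}$. I then split into two cases. \textbf{Case A:} $E\in\Sigma_1\setminus\Sigma_0$. Theorem~\ref{t:continuity} supplies $r>0$ such that for all $z\in\ov\h$ with $|z-E|\leq r$ the map $\Phi_z$ has a unique fixed point $h(z)$ depending continuously on $z$. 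Since $\Gm(z)$ is a fixed point of $\Phi_z$ for $z\in\h$, uniqueness forces $\Gm(z)=h(z)$, so the full limit $\Gm(E):=\lim_{\eta\downarrow 0}\Gm(E+i\eta)=h(E)\in\h^{\A}$ exists and $\Gm$ extends continuously to an $\R$-neighbourhood of $E$. \textbf{Case B:} $E\in(\R\setminus\Sigma_0)\setminus\Sigma_1$. Then $\Phi_E$ has no fixed point in $\h^{\A}$, so every accumulation point of $\Gm(z_n)$ lies in $\R^{\A}$; combined with the precompactness a standard subsequence argument gives $\lim_{\eta\downarrow 0}\Im\Gm(E+i\eta)=0$ for every sequence. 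Together these two cases establish existence of $\Im\Gm(E)\in[0,\infty)^{\A}$ and the equivalence in (2): $E\in\Sigma_1$ iff some $\Im\Gm_j(E)>0$, along with the existence and local continuity of $\Gm(E)\in\h^{\A}$ in that case.

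The most delicate remaining point, and where I expect the main obstacle to lie, is continuity of $\Im\Gm$ on $\R\setminus\Sigma_0$ at a Case B point $E$ when there is a sequence $E_k\in\Sigma_1\setminus\Sigma_0$ accumulating at $E$, since along such a sequence $\Im\Gm(E_k)$ is strictly positive in every coordinate and a priori need not tend to zero. I would resolve this by applying Lemma~\ref{l:accumulationpoints} a second time, now to the sequence $(\Gm(E_k))_k\subset\h^{\A}$ of real-energy fixed points of $\Phi_{E_k}$: any accumulation point is a fixed point of $\Phi_E$ in $\h^{\A}\cup\R^{\A}$, but $E\notin\Sigma_1$ rules out $\h^{\A}$, so all accumulation points lie in $\R^{\A}$ and hence $\Im\Gm(E_k)\to 0=\Im\Gm(E)$. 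Combined with the trivial observation $\Im\Gm(E')=0$ for $E'\notin\Sigma_1$ and the continuity of $\Gm$ inside $\Sigma_1\setminus\Sigma_0$ granted by Theorem~\ref{t:continuity}, this establishes continuity of $\Im\Gm$ on all of $\R\setminus\Sigma_0$ and completes the proof.
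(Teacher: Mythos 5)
Your proposal is correct and follows essentially the same architecture as the paper's proof: identify $\Gm(z)$ as the unique fixed point of $\Phi_z$ via Proposition~\ref{p:FHS}, use Lemma~\ref{l:Gmbounds} for precompactness of $\{\Gm(z)\}$, invoke the dichotomy of Lemma~\ref{l:accumulationpoints} for accumulation points, and appeal to Theorem~\ref{t:continuity} for uniqueness and continuity of the fixed point when $E\in\Sigma_1\setminus\Sigma_0$. Your two cases ($E\in\Sigma_1\setminus\Sigma_0$ versus $E\notin\Sigma_1$) are equivalent to the paper's dichotomy on the location of accumulation points (in $\h^{\A}$ versus all in $\R^{\A}$), since an accumulation point in $\h^{\A}$ is itself a fixed point of $\Phi_E$ and hence places $E$ in $\Sigma_1$, while conversely $E\in\Sigma_1\setminus\Sigma_0$ forces $\Gm(z)\to h(E)\in\h^{\A}$ by Theorem~\ref{t:continuity}. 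Where you genuinely go beyond the paper is the final paragraph: the paper asserts continuity of $\Im\Gm$ on all of $\R\setminus\Sigma_0$ but its proof only explicitly provides continuity on $\Sigma_1\setminus\Sigma_0$ (via Theorem~\ref{t:continuity}) and existence of the limit elsewhere, leaving unaddressed the continuity at a point $E\notin\Sigma_1$ which is a boundary point of $\Sigma_1\setminus\Sigma_0$. Your resolution --- applying Lemma~\ref{l:accumulationpoints} again, this time along a sequence of real energies $E_k\in\Sigma_1\setminus\Sigma_0$ with the associated fixed points $\Gm(E_k)\in\h^{\A}$, and ruling out $\h^{\A}$-accumulation because $E\notin\Sigma_1$ --- is correct (the lemma indeed permits $z_n\in\ov\h$, so real energies are allowed) and closes that gap cleanly.
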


\begin{lemma}\label{l:Sigma} The set $\Sigma$ consists of finitely many open intervals. Moreover,
$\Sigma_1\setminus\Sigma_0\subseteq\Sigma\subseteq\Sigma_1.$
\begin{proof}
We first check the inclusions.
The first inclusion follows from Theorem~\ref{t:continuity}.
The second inclusion is due to the fact that the truncated Green functions solve the polynomial equations.
We know that $\Sigma_1$ has finitely many connected components by Lemma~\ref{l:milnor} and  the set $\Sigma_0$ is finite by Lemma~\ref{l:Sigma0}. Hence, $\Sigma$ consists of finitely many intervals and it is open by definition.
\end{proof}\end{lemma}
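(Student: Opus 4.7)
The plan is to split the statement into the two inclusions and the finiteness claim; all three pieces should follow cleanly from material developed earlier, and the main work is just wiring together the right lemmas.

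For the easier inclusion $\Sigma\subseteq\Sigma_1$, I would take $E\in\Sigma$ and observe that by definition of $\Sigma$ the componentwise limit $\Gm(E)=\lim_{\eta\downarrow 0}\Gm(E+i\eta)$ exists in $\h^{\A}$. For $z\in\h$, the recursion \eqref{e:Gm2} is equivalent to the polynomial system $P_j(z,\Gm(z))=0$, $j\in\A$, which holds throughout $\h$. Letting $\Im z\downarrow 0$ and invoking continuity of the $P_j$ gives $P_j(E,\Gm(E))=0$ with $\Gm(E)\in\h^{\A}$, so $E\in\Sigma_1$ by definition of the latter set.

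For the harder inclusion $\Sigma_1\setminus\Sigma_0\subseteq\Sigma$, I would start from $E\in\Sigma_1\setminus\Sigma_0$ together with a fixed point $h\in\h^{\A}$ of $\Phi_E$ (which exists by definition of $\Sigma_1$). Theorem~\ref{t:continuity} then supplies an open neighborhood $U$ of $E$ in $\R$ on which $\Phi_z$ admits a unique fixed point $h(z)\in\h^{\A}$, depending continuously on $z\in\h\cup U$. By Proposition~\ref{p:FHS}, for $z\in\h$ this unique fixed point is precisely $\Gm(z)$, so $z\mapsto\Gm(z)$ extends continuously to $\h\cup U\to\h^{\A}$. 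Via the finite-cone-type identity $\Gm_x(z,\Lp)=\Gm_{a(x)}(z)$, every individual truncated Green function inherits the same continuous $\h$-valued extension to $\h\cup U$, and uniqueness of the extension is automatic from uniqueness of limits. Hence $U\in\mathcal{U}$ and $E\in U\subseteq\Sigma$.

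For the finiteness claim, I would combine Lemma~\ref{l:milnor}, which says that $\Sigma_1$ has only finitely many connected components, with Lemma~\ref{l:Sigma0} (under the standing non-regular assumption of this section), which forces $\Sigma_0\subseteq\{0\}$ to be finite. Then $\Sigma_1\setminus\Sigma_0$ is a finite disjoint union of intervals, and since $\Sigma$ is open by construction and sandwiched as $\Sigma_1\setminus\Sigma_0\subseteq\Sigma\subseteq\Sigma_1$, the difference $\Sigma\setminus(\Sigma_1\setminus\Sigma_0)\subseteq\Sigma_0$ is finite. Consequently $\Sigma$ has finitely many connected components, each of which is an open interval.

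I do not anticipate a real obstacle: Theorem~\ref{t:continuity} supplies the analytic heart of the argument, Lemma~\ref{l:milnor} and Lemma~\ref{l:Sigma0} take care of the topology, and the polynomial-limit argument for $\Sigma\subseteq\Sigma_1$ is formal. The only point that requires a moment's care is the transfer of the finite-dimensional continuous extension of the vector $\Gm(z)=(\Gm_j(z))_{j\in\A}$ back to the infinitely many truncated Green functions $\Gm_x(z,\Lp)$ appearing in the definition of $\Sigma$, which is immediate from $\Gm_x(z,\Lp)=\Gm_{a(x)}(z)$.
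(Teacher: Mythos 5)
Your proposal is correct and follows essentially the same route as the paper's proof: both inclusions come from Theorem~\ref{t:continuity} (combined with Proposition~\ref{p:FHS} and the finite-cone-type identity $\Gm_x(z,\Lp)=\Gm_{a(x)}(z)$) on the one hand and from passing to the limit in the polynomial system on the other, while the finiteness claim is the same sandwich argument using Lemma~\ref{l:milnor} and Lemma~\ref{l:Sigma0}. You have simply filled in the details that the paper leaves terse, and your bookkeeping (in particular the observation that $\Sigma\setminus(\Sigma_1\setminus\Sigma_0)\subseteq\Sigma_0$ is finite) is exactly the intended reasoning.
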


We are now prepared to prove Theorem~\ref{t:Sigma}.

\begin{proof}[Proof of Theorem~\ref{t:Sigma}] The case of regular trees was discussed right after the statement of the theorem. So, we consider only the case of non regular trees.\\
By the previous lemma, the set $\Sigma$ consists of finitely many intervals. As $\Gm_x$ is continuous by definition of $\Sigma$ we have that the maps $G_{x}:\h\to\h$ have continuous extensions to $\h\cup\Sigma\to\h$ for $x\in V$ by Proposition~\ref{p:G} and  Proposition~\ref{p:G2} (3.). As the measure $\Gm_x(E+i\eta,\Lp)dE$ converges  vaguely to the spectral measure $\mu_x$ as $\eta\downarrow0$, this yields  $\si(T)\supseteq\clos (\Sigma)$.\\
By Lemma~\ref{l:Gmlimits} we have that $\Im\Gm_{j}(E)=0$ for  $E\in\R\setminus\Sigma_1$ and $j\in\A$. Moreover,   Lemma~\ref{l:Gmbounds} gives the uniform boundedness of $\Gm_j(z)$ in $z$. Thus, by Proposition~\ref{p:G2}~(2.), we conclude that $\Im G_{x}(E)=0$ for all $E\in\R\setminus\Sigma_1$ and $x\in V$. Hence, $\si(T)\subseteq\clos(\Sigma_1)$.
As $\Sigma_0\subseteq\{0\}$  by Lemma~\ref{l:Sigma0}, the sets $\clos(\Sigma_1\setminus\Sigma_0)$ and $\clos(\Sigma_1)$ can differ by only by an isolated point which can only support a point measure. However, the uniform boundedness of $\Gm_{x}(z,T)$, which extends to $G_{x}(z,T)$, makes sure that eigenvalues do not occur. Therefore, we get $\si(T)=\clos(\Sigma_1\setminus\Sigma_0)$. By the previous lemma we obtain $\si(T)=\clos(\Sigma_1\setminus\Sigma_0)\subseteq\clos(\Sigma)$ which finishes the proof.
\end{proof}

\subsection{Absolutely continuous spectrum for the free operator and stability under radially label symmetric potentials}

\begin{proof}[Proof of Theorem~\ref{main1}]
This follows from Theorem~\ref{t:Sigma} as the spectral measures $\mu_x$ satisfy $\mu_x=\Im G_x(E)dE$ and hence  are absolutely continuous and supported on finitely many intervals.
\end{proof}

We now turn to the proof of  Theorem~\ref{main2}.

\begin{proof}[Proof of Theorem~\ref{main2}] Let $z\in\h$ and $v$ a radially label symmetric potential be given. Define $w:\N_0\times\A\to[-1,1]$ as $w_{a(x)}(|x|)=v(x)$ and $w_{j}(n)=0$ for $n\in\N_0$, $j\in\A$ where there is no $x\in V$ such that $(n,j)=(|x|,a(x))$. Moreover, let $\zeta_{j}(n)=-w_j(n)+z$ for $n\in\N_0$, $j\in\A$.
By the symmetry of the potential, the truncated Green function $\Gm_x(z,\Lp+\lm v)$ depends only on $|x|$ and $a(x)$. By Proposition~\ref{p:FHS} we have that $$\Gm_{x}(z,\Lp+\lm v)=\lim_{n\to\infty}\Phi_{\zeta(|x|),a(x)} \circ\ldots\circ\Phi_{\zeta(|x|+n)}(g)$$
for all $g\in\h^{\A}$. Let $E\in \Sigma_1\setminus\Sigma_0$. Then, by Theorem~\ref{t:continuity} there exists $\lm(E)>0$ such that that $\Gm_{x}(z,\Lp+\lm(E) v)$ lies in a ball about $\Gm_{x}(E,\Lp)$ for $z\in\h$ close to $E$ and all  radially label symmetric potentials $v$. For a closed set $I$ which is included in the interior of $\sigma(\Lp)\setminus\{0\}\subseteq\sigma(\Lp)\setminus\Sigma_0$ there is $\lm>0$ that $\Gm (z,\Lp+\lm v)$ is uniformly bounded in $z$ for $\Re z\in I$.\\
Thus, the spectral measures $d\mu_{x}=\Im \Gm (E,\Lp+\lm v)dE$, $x\in V$, are absolutely continuous.
\end{proof}

Theorem~\ref{main3} is  a direct consequence of Theorem~\ref{main2}.

\begin{proof}[Proof of Theorem~\ref{main3}] Multiplication by a  bounded $v$ which is vanishing at infinity is a compact operator. Therefore, we have preservation of the essential spectrum $\si_{\mathrm{ess}}(\Lp+v)=\si_{\mathrm{ess}}(\Lp)$. Moreover, by Theorem~\ref{main1}, the spectrum of $\Lp$ is purely absolutely continuous.
Thus, $\si_{\mathrm{ac}}(\Lp+ v)\subseteq \si_{\mathrm{ess}}(\Lp+ v)=\si_{\mathrm{ess}}(\Lp) \subseteq \si(\Lp)=\si_{\mathrm{ac}}(\Lp)$.\\
Conversely, the absolutely continuous spectrum is stable under finitely supported perturbations. In particular, setting $v$ zero at the vertices $x$ where $|v(x)|\geq\lm$ leaves the absolutely continuous spectrum of $\Lp+v$ invariant. By Theorem~\ref{main2}  the absolutely continuous spectrum of every compact subset included in the interior of  $\si(\Lp)\setminus\{0\}$ can be preserved under perturbations by sufficiently small radially label symmetric $v$. Since $v$ vanishes at infinity, every such interval is contained in the absolutely continuous spectrum of $\Lp+v$.
\end{proof}

\textbf{Acknowledgements.} The research was financially supported by a grant (MK) from Klaus Murmann Fellowship Programme (sdw), in part by a Sloan fellowship (SW) and NSF grant DMS-0701181 (SW). Part of this work was done while MK was visiting Princeton University. He would like to thank the Department of Mathematics for its hospitality.



\end{document}